\documentclass[fontsize=11pt]{amsart}

\newcounter{defcounter}
\numberwithin{defcounter}{section}
\numberwithin{equation}{section}

\usepackage{appendix}
\usepackage{amsmath}
\usepackage{amsthm}
\usepackage{amssymb}
\usepackage{calligra}
\usepackage{mathtools}
\usepackage{mathrsfs}
\usepackage{physics}
\usepackage[margin=1in]{geometry}
\usepackage{scrextend}
\usepackage{tikz-cd}
\usepackage[shortlabels]{enumitem}
\usepackage{colonequals}
\usepackage{hyperref}
\usepackage[all]{xy}
\usepackage{pdfsync}
\usepackage[mathscr]{euscript}
\usepackage[notcolon, notquote]{hanging}
\usepackage{verbatim}
\usepackage{tikz}
\usepackage{stackengine}
\usepackage{adjustbox}
\usepackage[nopostdot,nogroupskip,sort=use]{glossaries}

\newcommand{\lra}{\longrightarrow}

\makeatletter
\renewenvironment{proof}[1][\proofname]{\par
  \pushQED{\qed}%
  \normalfont
  \topsep-5\p@\@plus6\p@\relax 
  \trivlist
  \item[\hskip\labelsep
        \upshape
    \emph{#1}\@addpunct{.}]\ignorespaces
}{%
  \popQED\endtrivlist\@endpefalse
}
\makeatother

\theoremstyle{plain}
\newtheoremstyle{mytheoremstyle}{5pt}{-5pt}{\itshape}{}{\bfseries}{.}{.5em}{} 
\theoremstyle{mytheoremstyle}
\newtheorem{theorem}{Theorem}[section]

\theoremstyle{definition}
\newtheoremstyle{mydefinitionstyle}{5pt}{-5pt}{}{}{\bfseries}{.}{.5em}{}
\theoremstyle{mydefinitionstyle}

\newtheorem{corollary}[theorem]{Corollary}

\newtheorem{lemma}[theorem]{Lemma}

\tikzset{
  symbol/.style={
    draw=none,
    every to/.append style={
      edge node={node [sloped, allow upside down, auto=false]{$#1$}}}
  }
}
\tikzcdset{scale cd/.style={every label/.append style={scale=#1},
    cells={nodes={scale=#1}}}}

\author{Justin Lacini}
\address{Department of Mathematics, Texas A$\&$M University, College Station, Texas  77840}
\email{jlacini@tamu.edu}

\begin{document}
\title{Effective syzygies of secant varieties}

\begin{abstract}
We provide effective bounds on adjoint line bundles on $X$ for which the secant variety $\Sigma(X)$ satisfies property $N_{3,p}$. The given bounds only depend on $\operatorname{dim}(X)$ and $p$.
\end{abstract}

\maketitle

\section{Introduction}

Let $X\subseteq \mathbb{P}^r$ be a non-degenerate smooth complex projective variety of dimension $n$. The secant variety 
$\Sigma(X)$ is defined to be 
\[
\Sigma(X):=\overline{\bigcup_{\substack{p,q\in X\\ p\neq q}} \operatorname{Span}\{p,q\}}.
\]

\noindent The purpose of this paper is to prove the following result on the syzygies of $\Sigma(X)$.

\begin{theorem}\label{thm1-intro}
    Let $X$ be a smooth projective variety of dimension $n$, let $A$ be a very ample line bundle, let $B$ be a nef line bundle, and set $L:=\omega_X \otimes A^m\otimes B$. Consider the embedding $X\subseteq \mathbb{P}^r$ given by $L$. If $m\geq 2(2n+p)(n+1)+n^2$ then $\Sigma(X)$ satisfies property $N_{3,p}$. 
\end{theorem}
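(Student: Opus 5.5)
This follows the approach of Ein--Niu--Park. There, for curves and surfaces, property $N_{3,p}$ for $\Sigma(X)$ is derived from vanishings on the Hilbert scheme $X^{[2]}$ and on the symmetric products. Here we make these vanishings effective in terms of $m$, $n$ and $p$.

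\textbf{Step 1: normality and the resolution.} First show that $\Sigma(X)$ is projectively normal and has Du Bois singularities, with ideal sheaf regularity under control. Set $Z=X^{[2]}$, let $E\subset Z$ be the exceptional divisor, and let $E_L$ be the tautological rank-two bundle on $Z$. The projective bundle $\mathbb{P}(E_L)$ maps birationally onto $\Sigma(X)$. Via this map, one computes $H^0(\mathcal{O}_{\Sigma}(k))$ and the higher direct images from cohomology of $S^{k}E_L$ and its twists on $Z$. Equivalently, one works with the $\mathfrak{S}_2$-invariant part of cohomology on $X\times X$ blown up along the diagonal. We need $\Sigma(X)$ to be arithmetically Cohen--Macaulay with $H^i(\mathcal{O}_\Sigma(k))=0$ for $i>0$, $k\ge 1$, which reduces to Kodaira/Kawamata--Viehweg vanishing. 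Positivity of $L^{\boxtimes 2}(-E)$ on the blowup follows once $L$ is sufficiently positive; the condition $m\ge 2n+1$ suffices for $L=\omega_X\otimes A^m\otimes B$, using that $\omega_X\otimes A^{n+1}\otimes B$ is globally generated and nef.

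\textbf{Step 2: reduction to a vanishing.} Property $N_{3,p}$ means the ideal $I_\Sigma$ is generated by cubics and the first $p$ steps of the resolution are linear. By the standard Koszul cohomology argument, this is equivalent to the vanishing of
\[
H^1\bigl(\mathbb{P}^r,\wedge^{i}M_{L}\otimes I_{\Sigma}(k)\bigr)
\]
in the appropriate range $i\le p+1$, $k\ge 2$. Here $M_L$ is the kernel bundle of the evaluation map $H^0(L)\otimes\mathcal{O}\to L$. Given Step 1, this translates into the vanishing of
\[
H^j\bigl(Z,\wedge^{i}M_{E_L}\otimes S^{k}E_L\otimes\cdots\bigr)
\]
or an analogous statement on $X\times X$. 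The tool is the filtration of $\wedge^i M_{L}$ restricted to $\Sigma$ coming from $0\to M_{E_L}\to H^0(L)\otimes\mathcal{O}\to E_L\to 0$. We then push forward to $X^{\times 2}$ and $X^{\times (i+2)}$, expressing $\wedge^iM$ through Künneth-type complexes of $L^{\boxtimes}$ twisted by the ideals of the diagonals. This is the method of Ein--Lazarsfeld's $N_p$ for adjoint bundles.

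\textbf{Step 3: effective vanishing, the main obstacle.} We must show that the relevant sheaves of the form
\[
(\omega_X\otimes A^m\otimes B)^{\boxtimes q}\otimes I_{\Delta}^{\,\bullet}
\]
on $X^{q}$, with $q\le 2n+p$ roughly, have vanishing higher cohomology. The plan is to write such a sheaf as $\omega_{X^q}\otimes N$ on a suitable blowup along the partial diagonals. We then need $N$ nef and big, which holds when $A^m$ beats the discrepancies of the diagonal blowups. Bounding these discrepancies (each contributes about $(n+1)$ per factor), over roughly $2(2n+p)$ factors, gives $2(2n+p)(n+1)$. The extra $n^2$ should absorb the Castelnuovo--Mumford regularity needed to pass from $A$ very ample to global generation of the relevant twists, i.e.\ the $n$ extra twists on each of $n$ steps. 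I expect the hardest part to be controlling the multiplier-ideal / discrepancy terms uniformly along all partial diagonals simultaneously. If direct blowups get messy, I would first try Fujita-type vanishing with explicit constants via Mumford regularity of $I_\Delta$ with respect to $A^{\boxtimes q}$. Concluding the required $H^j$ vanishings then gives $N_{3,p}$.
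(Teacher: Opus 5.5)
Your proposal has a genuine gap: the tensor-power/K\"unneth reduction in Steps 2 and 3 cannot work for $M_{2,L}$, and you have not replaced it with anything that does.

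\textbf{Why the tensor-power route fails.} Steps 2 and 3 follow the Ein--Lazarsfeld template. You pass from $\wedge^j$ to tensor powers (K\"unneth-type complexes), write these as pushforwards of $L^{\boxtimes q}$ twisted by diagonal ideals, and conclude by Kawamata--Viehweg on a blow-up once $A^m$ dominates the discrepancies. For $\Sigma(X)$ this is exactly the step that fails.

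Via Theorem~\ref{n3p-criterion}, the essential new input is
\[
H^i(X^{[2]},\wedge^jM_{2,L}\otimes A_{2,L})=0 \quad\text{for } i\geq 1,\ 0\leq j\leq i+p.
\]
But the tensor-power version $H^1(X^{[2]},M_{2,L}^{\otimes j}\otimes A_{2,L})$ can be nonzero \emph{no matter how positive $L$ is} \cite[Remark 4.15]{Choi.Lacini.Park.Sheridan.25}.

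The obstruction is not a discrepancy. It is negativity of $M_{2,L}$ along the exceptional divisor $E$ of the Hilbert--Chow morphism. On $X^{[1,2]}=\operatorname{Bl}_{\Delta_X}(X\times X)$ one has $\rho^*M_{2,L}|_{F_1}\cong K$ on $F_1=\mathbb{P}(\Omega^1_X)$, an extension of $\Omega_{Y/X}\otimes\pi^*L(1)$ by $\pi^*(N^\vee_{X/\mathbb{P}^r}\otimes L)$. So on a fiber $\mathbb{P}^{n-1}$ of $\pi$ (for $n\geq 2$), $\rho^*(M_{2,L}^{\otimes j}\otimes A_{2,L})$ contains $\Omega_{\mathbb{P}^{n-1}}(1)^{\otimes j}\otimes\mathcal{O}(2)$. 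This acquires higher cohomology as $j$ grows: for $j=k(n-1)$ it contains $\omega_{\mathbb{P}^{n-1}}^{\otimes k}(j+2)=\mathcal{O}(2-k)$.

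Since $L$ enters only through $\pi^*L$, which is trivial on these fibers, increasing $m$ cannot help. No discrepancy bookkeeping on blow-ups of $X^q$ will produce the vanishing. Consequently your derivation of $2(2n+p)(n+1)+n^2$ (``$n+1$ per factor'' plus ``$n^2$ for regularity'') is not backed by an argument. Note also that tensor powers of $M_{2,L}$ are pushforwards from $X^t\times X^{[2]}$ twisted by the ideals of the universal families $\mathcal{Z}_2^{k,t+1}$, not from $X^q$ with diagonal ideals.

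\textbf{The missing idea.} You must gain positivity along $E$ \emph{before} passing to tensor powers, while still working with $\wedge^j$. With $Q:=\wedge^jM_{2,L}\otimes A_{2,L}$, the paper uses
\[
0\lra\rho^*Q(-(a+1)F_1)\lra\rho^*Q(-aF_1)\lra\wedge^jK\otimes\pi^*L^2(a+2)\lra 0 .
\]
It proves $H^i(F_1,\wedge^jK\otimes\pi^*L^2(a+2))=0$ by filtering $\wedge^jK$ and pushing down to $X$. The fiberwise pieces $\Omega^{j-t}_{\mathbb{P}^{n-1}}(j-t+a+2)$ have no higher cohomology by Bott. The graded pieces on $X$ are $L^{j+2}\otimes\wedge^tN^\vee_{X/\mathbb{P}^r}\otimes S_{(a+2,1^{j-t})}\Omega^1_X$. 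These are killed by a Manivel-type vanishing for Schur functors of $\Omega^1_X\otimes A^2$ combined with tensor powers of $N^\vee_{X/\mathbb{P}^r}\otimes L$ (Theorem~\ref{vanishing-schur-normal-bundle}, Corollary~\ref{specialized-vanishing}, Theorem~\ref{vanishing-filtration}).

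This gives surjections $H^i(\rho^*Q(-(a+1)F_1))\to H^i(\rho^*Q(-aF_1))$. Since $\rho_*\mathcal{O}_{X^{[1,2]}}(-aF_1)\cong\delta_2^{-a}\oplus\delta_2^{-a-1}$, the problem descends to $H^i(X^{[2]},Q\otimes\delta_2^{-a-1})=0$ for $a$ of order $jn$. Only at this point is $\wedge^j$ replaced by $\otimes^j$. Then Kawamata--Viehweg on the blow-up of $X^t\times X^{[2]}$ along the $\mathcal{Z}_2^{k,t+1}$ (Theorem~\ref{vanishing-tensor-product}) applies, with $\delta_2^{-a-1}$ supplying the positivity along $E$ encoded in $N_{2,A^2}^{jn}$.

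The bound comes from $m\geq (n+2)j+(a+2)+ns$ with $j\leq 2n+p$, $a+2\approx jn$ and $s\leq n-1$, not from per-factor discrepancies plus regularity.

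\textbf{On Step 1.} Your Step 1 is not where the difficulty lies. The reduction is Theorem~\ref{n3p-criterion}, whose remaining hypotheses (separation of $4$-schemes, vanishing for $\wedge^jM_{1,L}\otimes L$ on $X$) are standard in this range of $m$. Your arithmetic Cohen--Macaulay claim and the bound $m\geq 2n+1$ are asserted without justification.
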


 Ullery \cite{Ullery.16} showed that $\Sigma(X)$ is normal for $m\geq 2n+2$. Chou and Song \cite{Chou.Song.18} proved that in this range $\Sigma(X)$ has in fact Du Bois singularities. Later it was shown in \cite{Choi.Lacini.Park.Sheridan.25} that for $m\geq 4n$ the embedding $\Sigma(X)\subseteq \mathbb{P}^r$ is projectively normal. After that, Choi and Park \cite{Choi.Park.26} answered \cite[Question 7.11]{Choi.Lacini.Park.Sheridan.25}, showing that the optimal $m\geq 2n+2$ suffices. Moreover, they proved that for $m\geq 3n+2$ the homogeneous ideal of $\Sigma(X)$ is generated by cubics.

Once the degree of the generators of the homogeneous ideal of $\Sigma(X)$ is understood, it is natural to study the degree of their syzygies. Recall that a projectively normal embedding $Y\subseteq \mathbb{P}^r$ satisfies property $N_{k,p}$ if the Koszul cohomology groups $K_{i,j}(Y,\mathcal{O}_Y(1))$ vanish in the range $0\leq i\leq p$ and $j\geq k$. A landmark result of Ein and Lazarsfeld \cite{Ein.Lazarsfeld.93}, later improved in \cite{Bangere.Lacini.25}, shows that $X\subseteq \mathbb{P}^r$ satisfies property $N_{2,p}$ if $\mathcal{O}_X(1)\cong\omega_X\otimes A^m \otimes B$ for $m\geq n+1+p$. Theorem B of \cite{Choi.Lacini.Park.Sheridan.25} shows that if $\mathcal{O}_X(1)$ is ``sufficiently ample" relative to $X$ and $p$, then $\Sigma(X)$ satisfies property $N_{3,p}$. This result is, however, far from effective, and the main goal of the present paper is to provide effective bounds. In particular, Theorem \ref{thm1-intro} combined with \cite{Choi.Park.26} completely answers \cite[Question 7.8]{Choi.Lacini.Park.Sheridan.25} for $\Sigma(X)$ when $A$ is very ample.

The approach we take for Theorem \ref{thm1-intro} is cohomological, within the general framework set in \cite{Choi.Lacini.Park.Sheridan.25}. In particular, proving property $N_{3,p}$ essentially amounts to proving the vanishing
\[\tag{*}
H^i\left(X^{[2]}, \bigwedge^j M_{2,L} \otimes A_{2,L}\right)=0
\]

\noindent in the range $i\geq 1$ and $0\leq j\leq i+p$, where $M_{2,L}$ and $A_{2,L}$ are, respectively, a naturally defined vector bundle and line bundle on the Hilbert scheme of points $X^{[2]}$ (see Section \ref{section-preliminaries} for the details). When studying syzygies on $X$ itself, rather than $\Sigma(X)$, one often needs to provide a vanishing analogous to (*), with $M_{2,L}$ replaced by the syzygy bundle $M_L$ on $X$. In this case, it is standard to reduce the desired vanishing for $\bigwedge^j M_L$ to a vanishing for $M_L ^{\otimes j}$, which is much easier to handle (see for example \cite{Ein.Lazarsfeld.93} and \cite{Bangere.Lacini.25}). 

For the vanishing (*) on the Hilbert scheme $X^{[2]}$, however, the situation is quite different. It was indeed shown in \cite[Remark 4.15]{Choi.Lacini.Park.Sheridan.25} that it may happen
\[
H^1(X^{[2]}, M_{2,L}^{\otimes j}\otimes A_{2,L})\neq 0
\]

\noindent regardless of the positivity of $L$. The key issue is that $M_{2,L}$ acquires negativity when restricting to the exceptional divisor $E$ of the Hilbert--Chow morphism 
\[
h:X^{[2]}\lra \operatorname{Sym}^2 X,
\]

\noindent and this negativity increases under tensor products. This pattern is replicated in the nested Hilbert scheme $X^{[1,2]}$, which may be expressed as the blow up of $X\times X$ along the diagonal $\Delta_X$, with exceptional divisor $F$.
Note that we may compare $X^{[1,2]}$ with $X^{[2]}$ via the finite flat forgetful map $\rho:X^{[1,2]}\lra X^{[2]}$.

Here we overcome the loss of positivity by directly working with the restriction of $\rho^* \bigwedge^j M_{2,L}$ to $F$. The advantage is that $F$ admits the simple description $F=\mathbb{P}(\Omega_X^1)$.
By appropriately filtering $\rho^* \left(\bigwedge^j M_{2,L}\otimes A_{2,L}\right)|_F$, we are then able to obtain vanishing of its higher cohomology via a sequence of vector bundle vanishing theorems on $X$. Moreover, the vanishing holds up to twists by multiples of $F$. This in turn allows us to subtract multiples of $E$ in the original desired vanishing (*), thus increasing the positivity of $\bigwedge^j M_{2,L}\otimes A_{2,L}$. It is at this point that we may replace it with $M_{2,L}^{\otimes j}$, and a separate vanishing theorem for $M_{2,L}^{\otimes j}\otimes A_{2,L}(-aE)$ then concludes the proof. 

\textbf{Acknowledgements.} ChatGPT 6, provided by OpenAI through the Academic Researcher Program, was used to check for correctness. All the proofs have been obtained by the author, who wrote the paper in its entirety.

\section{Preliminaries}\label{section-preliminaries}

Here we briefly recall the set-up of \cite{Choi.Lacini.Park.Sheridan.25}.

\subsection{Hilbert scheme of points}\label{subsection-hilbert-scheme} For $k\geq 1$ we denote by $X^{[k]}$ the Hilbert scheme of $0$-dimensional subschemes of $X$ of length $k$. 
We refer to $0$-dimensional subschemes of length $k$ as $k$-schemes. 
The Hilbert--Chow morphism is the canonical morphism
\[
h:X^{[k]}\lra \operatorname{Sym}^k (X). 
\]

\noindent Whenever $X^{[k]}$ is irreducible the Hilbert--Chow morphism is birational. 
For any line bundle $L$ on $X$, we have an induced line bundle $L^{\boxtimes k}$ on $X^k$, which descends
to a line bundle $S_{k,L}$ on $\operatorname{Sym}^k(X)$. We set $T_{k,L}:=h^* S_{k,L}$.

For $1\leq m\leq k$, we denote by $X^{[m,k]}$ the nested Hilbert scheme of $m$-subschemes of $k$-schemes. In this paper, we will be mostly interested in $X^{[2]}$ and $X^{[1,2]}$. We have natural forgetful morphisms
\[
\tau:X^{[m,k]}\lra X^{[m]}
\]

\noindent and 
\[
\rho:X^{[m,k]}\lra X^{[k]}. 
\]

\noindent There is also a residue map
\[
\operatorname{res}:X^{[k-1,k]}\lra X
\]

\noindent sending the pair $(\xi\subseteq \eta)$ to the point defined by $(\mathcal{I}_\eta:\mathcal{I}_\xi)$. 
Let $\mathcal{Z}_k\subseteq X\times X^{[k]}$ be the universal family. Note that $\operatorname{pr}_2:\mathcal{Z}_k\lra X^{[k]}$ is a finite flat map of degree $k$. Let $L$ be a line bundle and consider the short exact sequence
\[
0\lra \operatorname{pr}_1^* L \otimes \mathcal{I}_{\mathcal{Z}_k}\lra \operatorname{pr}_1^* L \lra \operatorname{pr}_1^* L \otimes \mathcal{O}_{\mathcal{Z}_k}\lra 0. 
\]

\noindent If $L$ separates $k$ schemes, we have an induced short exact sequence
\[
0\lra \operatorname{pr}_{2,*}(\operatorname{pr}_1^* L \otimes \mathcal{I}_{\mathcal{Z}_k})\lra \operatorname{pr}_{2,*} \operatorname{pr}_1^* L \lra \operatorname{pr}_{2,*}(\operatorname{pr}_1 ^* L \otimes\mathcal{O}_{\mathcal{Z}_k})\lra 0. 
\]

\noindent Setting $M_{k,L}:=\operatorname{pr}_{2,*}(\operatorname{pr}_1^* L \otimes \mathcal{I}_{\mathcal{Z}_k})$
and $E_{k,L}:=\operatorname{pr}_{2,*}(\operatorname{pr}_1 ^* L \otimes\mathcal{O}_{\mathcal{Z}_k})$, we may rewrite the above sequence as
\[
0\lra M_{k,L}\lra H^0(X,L)\otimes \mathcal{O}_{X^{[k]}}\lra E_{k,L}\lra 0,
\]

\noindent where all the sheaves are locally free. Note that $E_{k, \mathcal{O}_X}=\operatorname{pr}_{2,*} \mathcal{O}_{\mathcal{Z}_k}$ is a locally free sheaf of rank $k$. We set
\[
\delta_k :=\operatorname{det}(E_{k,\mathcal{O}_X})^{-1}. 
\]

\noindent We also set $N_{k,L}:=\operatorname{det}(E_{k,L})$. We have $N_{k,L}=T_{k,L}\otimes \delta_k ^{-1}$ by \cite[Lemma 2.3]{Choi.Lacini.Park.Sheridan.25}.
Set $A_{k,L}:=T_{k,L}\otimes \delta_k ^{-2}$. 

If $k\leq 3$, the map
\[
\operatorname{res}\times \tau:X^{[k-1,k]}\lra X\times X^{[k-1]}
\]

\noindent may be identified the blowing up along $\mathcal{Z}_{k-1}$. Therefore, we often denote
$\operatorname{res}\times \tau$ as
\[
\operatorname{bl}:X^{[k-1,k]}\lra X\times X^{[k-1]},
\]

\noindent and we set $F_{k-1}:=\operatorname{Ex}(\operatorname{bl})$. We have
\[
\rho^*\delta_k \otimes \tau^* \delta_{k-1}^{-1}\cong \mathcal{O}_{X^{[k-1, k]}}(F_{k-1})
\]

\noindent by \cite[Lemma 2.4]{Choi.Lacini.Park.Sheridan.25}. 

\subsection{Secant varieties} Let $X\subseteq \mathbb{P}^r$ be a smooth projective variety. As discussed in the introduction, the secant variety $\Sigma(X)$ of $X$ is defined as
\[
\Sigma(X):=\overline{\bigcup_{\substack{p,q\in X\\ p\neq q}} \operatorname{Span}\{p,q\}}.
\]

\noindent Set $L:=\mathcal{O}_X(1)$ and suppose that the embedding $X\subseteq \mathbb{P}^r$ is linearly normal. Then we may construct a morphism, which is a resolution of singularities of $\Sigma(X)$ if $L$ separates 4-schemes (see \cite[Lemma 1.1]{Ullery.16}), as follows. 
Set
\[
B^2:=\mathbb{P} (E_{2,L})
\]

\noindent with associated morphism $\pi_2:B^2 \lra X^{[2]}$. Since $L$ is very ample, we have that
$E_{2,L}$ is globally generated, and therefore $\mathcal{O}_{B^2}(1)$ is globally generated as well. 
Since $H^0(B^2,\mathcal{O}_{B^2}(1))=H^0(X^{[2]}, E_{2,L})=H^0(X,L)$, we obtain a morphism
\[
\alpha_2:B^2\lra \mathbb{P}^r.
\]

\noindent We denote $\sigma_2:=\operatorname{Im}(\alpha_2)$, and we note that $\sigma_2=\Sigma(X)$. 

In the sequel, we show that property $N_{3,p}$ holds via the following cohomological criterion (see \cite[Remark 6.4]{Choi.Lacini.Park.Sheridan.25}). 

\begin{theorem}\label{n3p-criterion}
    Let $X$ be a smooth projective variety and let $L$ be a line bundle that separates $4$-schemes. If
    \[
    H^i(X, \wedge ^j M_{1,L}\otimes L)=H^i(X^{[2]}, \wedge^j M_{2,L}\otimes A_{2,L})=0
    \]

\noindent for all $i\geq 1$ and $0\leq j\leq i+p$ then $\Sigma(X)$ satisfies property $N_{3,p}$. 
\end{theorem}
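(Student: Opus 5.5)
This criterion is cited from \cite[Remark 6.4]{Choi.Lacini.Park.Sheridan.25}, so the plan is to reconstruct the argument behind it. The strategy is to compute the Koszul cohomology of $\Sigma(X)\subseteq\mathbb{P}^r$ through the resolution $\alpha_2:B^2\to\Sigma(X)$, and then push the computation down to $X^{[2]}$ and $X$.

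\textbf{Step 1: reduction to $B^2$.} Since $L$ separates $4$-schemes, the known results of \cite{Ullery.16} and \cite{Choi.Lacini.Park.Sheridan.25} give two facts:
\begin{itemize}
\item $\Sigma(X)$ is normal with $\alpha_{2,*}\mathcal{O}_{B^2}=\mathcal{O}_{\Sigma(X)}$;
\item $R^i\alpha_{2,*}\mathcal{O}_{B^2}=0$ for $i>0$ (the singularities are Du Bois, and rational in the relevant sense).
\end{itemize}
Hence $H^i(\Sigma(X),\mathcal{O}(k))=H^i(B^2,\mathcal{O}_{B^2}(k))$ for all $i$ and $k$. Let $M_{\mathbb{P}^r}$ be the syzygy bundle of $\mathcal{O}_{\mathbb{P}^r}(1)$. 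Its pullback to $B^2$ is the kernel $M_{B^2}$ of $H^0(L)\otimes\mathcal{O}_{B^2}\to\mathcal{O}_{B^2}(1)$. The standard kernel-bundle description of Koszul cohomology then reduces $K_{i,j}(\Sigma(X),\mathcal{O}(1))=0$, for $i\le p$ and $j\ge 3$, to vanishings of the form
\[
H^{q}\big(B^2,\textstyle\bigwedge^{a} M_{B^2}\otimes\mathcal{O}_{B^2}(b)\big)=0,
\]
with $q\geq 1$, $a\le q+p$ and $b\ge 1$. The condition $j\geq 3$ is what allows $b$ to be small relative to $q$. Projective normality, i.e.\ the case $j\geq 3$ with $i=0$, falls under the same scheme.

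\textbf{Step 2: pushforward to $X^{[2]}$.} Pull back the sequence $0\to M_{2,L}\to H^0(L)\otimes\mathcal{O}\to E_{2,L}\to 0$ along $\pi_2$ and compare it with the tautological sequence on $\mathbb{P}(E_{2,L})$. This gives a filtration
\[
0\to\pi_2^*M_{2,L}\to M_{B^2}\to\Omega_{\pi_2}(1)\to 0.
\]
Taking exterior powers, the graded pieces of $\bigwedge^a M_{B^2}$ are $\pi_2^*\bigwedge^{a-c}M_{2,L}\otimes\Omega^c_{\pi_2}(c)$ with $c\in\{0,1\}$, since $\pi_2$ has relative dimension one. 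Pushing forward with $\mathcal{O}(b)$ twists, Bott-type vanishing on projective-space fibres leaves terms of the form $\bigwedge^{a'}M_{2,L}\otimes \operatorname{Sym}^{b'}E_{2,L}$ and $\det E_{2,L}$. Now use $N_{2,L}=T_{2,L}\otimes\delta_2^{-1}$ and $A_{2,L}=N_{2,L}\otimes\delta_2^{-1}$, together with $\operatorname{Sym}$ versus restriction to the diagonal. The degree-$1$ part, $E_{2,L}$ tensored with wedge powers, is related to $X$ itself via $\rho,\tau$ and the universal family, $\operatorname{pr}_{2,*}\operatorname{pr}_1^*(\cdot)$. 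This is where $H^i(X,\bigwedge^jM_{1,L}\otimes L)$ enters. The genuinely secant part becomes $H^i(X^{[2]},\bigwedge^jM_{2,L}\otimes A_{2,L})$, with the index shift from $\delta_2^{-1}$ giving exactly the range $j\le i+p$.

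\textbf{Step 3 (main obstacle).} The hardest step is the bookkeeping of the indices. One must check that every term produced in Step 2 with $b\ge 2$ is controlled, either by induction on $b$ via multiplication maps or by the hypothesis applied with $A_{2,L}$. One must also check that the range $0\le j\le i+p$ is exactly what the spectral sequence requires. I would handle higher $b$ using the surjectivity of $\operatorname{Sym}^bH^0(L)\to H^0(B^2,\mathcal{O}(b))$, which is projective normality, supplied here by the $j=0$ cases of the hypothesis. Then I would run a diagram chase on the Koszul complexes, exactly as in \cite[Section 6]{Choi.Lacini.Park.Sheridan.25}.
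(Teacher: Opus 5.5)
The paper gives no proof of this criterion; it imports it from \cite[Remark 6.4]{Choi.Lacini.Park.Sheridan.25}. Your reconstruction therefore has to stand on its own, and it breaks at Step 1.

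\textbf{The Step 1 claim is false.} You assert $R^i\alpha_{2,*}\mathcal{O}_{B^2}=0$ for $i>0$, but $\Sigma(X)$ is Du Bois and is not rational unless $H^i(X,\mathcal{O}_X)=0$ for all $i>0$. To see this, note that $\alpha_2$ is an isomorphism off $X$. Its exceptional locus is $Z:=\alpha_2^{-1}(X)\cong X^{[1,2]}\cong\operatorname{Bl}_{\Delta_X}(X\times X)$, which maps to $X$ with fibres $\operatorname{Bl}_xX$. The Du Bois property gives $R\alpha_{2,*}\mathcal{O}_{B^2}(-Z)\simeq\mathcal{I}_{X/\Sigma(X)}$. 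Hence for $i\geq 1$ we get $R^i\alpha_{2,*}\mathcal{O}_{B^2}\cong R^i\alpha_{2,*}\mathcal{O}_Z\cong H^i(X,\mathcal{O}_X)\otimes\mathcal{O}_X$. So cohomology on $B^2$ does not compute cohomology on $\Sigma(X)$. Worse, the vanishings on $B^2$ that you reduce to are false. For example,
\begin{align*}
H^1(B^2,\mathcal{O}_{B^2}(1))&=H^1(X^{[2]},E_{2,L})=H^1(X^{[1,2]},\tau^*L)\\
&=H^1(X\times X,\mathcal{O}_X\boxtimes L)\supseteq H^1(X,\mathcal{O}_X)\otimes H^0(X,L).
\end{align*}
This is nonzero whenever $H^1(X,\mathcal{O}_X)\neq 0$. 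A curve of positive genus with $\deg L\gg 0$ is such a case, and there the hypotheses of the criterion do hold.

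\textbf{The missing idea} is to use the Du Bois property through the ideal sheaf. Split via $0\to\mathcal{I}_{X/\Sigma(X)}\to\mathcal{O}_{\Sigma(X)}\to\mathcal{O}_X\to 0$ and take the long exact sequence in Koszul cohomology.

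\emph{The $\mathcal{O}_X$ term} is governed by the syzygies of $X$ itself. This is where $H^i(X,\bigwedge^jM_{1,L}\otimes L)$ enters, not through $E_{2,L}$, $\rho$ and $\tau$.

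\emph{The ideal term} is computed on $B^2$ via $R\alpha_{2,*}\mathcal{O}_{B^2}(-Z)\simeq\mathcal{I}_{X/\Sigma(X)}$.
\begin{enumerate}
\item Since $Z$ cuts each secant line $\pi_2^{-1}(\xi)$ exactly in $\xi$, one checks $\mathcal{O}_{B^2}(2)(-Z)\cong\pi_2^*A_{2,L}$. This is the actual origin of $A_{2,L}$.
\item Twist your filtration $0\to\pi_2^*M_{2,L}\to M_{B^2}\to\Omega_{\pi_2}(1)\to 0$ by this bundle. The graded pieces involving $\Omega_{\pi_2}(1)\cong\mathcal{O}_{B^2}(-1)\otimes\pi_2^*\det E_{2,L}$ have no direct images.
\item Writing $M:=\Omega_{\mathbb{P}^r}(1)|_{\Sigma(X)}$, this yields $H^q(\Sigma(X),\bigwedge^aM\otimes\mathcal{I}_{X/\Sigma(X)}(2))\cong H^q(X^{[2]},\bigwedge^aM_{2,L}\otimes A_{2,L})$.
\item Higher twists $\operatorname{Sym}^cE_{2,L}\otimes A_{2,L}$ reduce to these via the Koszul resolution of $\operatorname{Sym}^cE_{2,L}$ by $\bigwedge^\bullet M_{2,L}$.
\end{enumerate}

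\emph{The range $j\leq i+p$} is exactly what the standard chain requires. It starts from $K_{p,q}\subseteq H^1(\bigwedge^{p+1}M\otimes\mathcal{F}(q-1))$. It descends to $H^i(\bigwedge^{p+i}M\otimes\mathcal{F}(2))$ for $\mathcal{F}=\mathcal{I}_{X/\Sigma(X)}$, and to $H^i(\bigwedge^{p+i}M_{1,L}\otimes L)$ for $\mathcal{F}=\mathcal{O}_X$.

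Your Step 2 never produces $A_{2,L}$, and ``the index shift from $\delta_2^{-1}$'' is not a mechanism. So even apart from Step 1, the bookkeeping you defer to Step 3 is where the actual proof is missing.
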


\subsection{Syzygy bundle and normal bundle}\label{subsection-syzygy-normal}
We have seen that for the syzygy bundle $M_L = M_{1,L}$ we have a short exact sequence
\[
0\lra M_L\lra H^0(X,L)\otimes \mathcal{O}_X\lra L\lra 0. 
\]

\noindent As a consequence, we have $M_L=f^* \Omega_{\mathbb{P}^r}^1(1)$, where $f:X\lra \mathbb{P}^r$ is the closed embedding induced by $L$. Therefore, the cotangent sequence of $X\subseteq \mathbb{P}^r$ may be written as
\[
0\lra N_{X/\mathbb{P}^r}^\vee \otimes L\lra M_L \lra \Omega_X ^1 \otimes L\lra 0.
\]

\noindent At the same time, we have an identification
\[
M_L\cong \operatorname{pr}_{2,*} (\operatorname{pr}_1 ^* L \otimes \mathcal{I}_{\Delta_X}),
\]

\noindent where $\Delta_X\subseteq X\times X$ is the diagonal. Finally, we have an identification
\[
\Omega_X ^1 \cong \operatorname{pr}_{2,*}(\mathcal{I}_{\Delta_X}/ \mathcal{I}_{\Delta_X}^2). 
\]

\noindent Consider the short exact sequence
\[
0\lra \mathcal{I}_{\Delta_X}^2 \lra \mathcal{I}_{\Delta_X}\lra \mathcal{I}_{\Delta_X}/ \mathcal{I}_{\Delta_X}^2 \lra 0.
\]

\noindent Twisting by $\operatorname{pr}_1 ^* L$ and pushing forward via $\operatorname{pr}_2$, we obtain an identification
\[
N_{X/\mathbb{P}^r}^\vee \otimes L \cong \operatorname{pr}_{2,*}(\operatorname{pr}_1 ^* L \otimes \mathcal{I}_{\Delta_X}^2). 
\]

\section{Vanishing}\label{section-vanishing}

In this section we prove our main vanishing results, Theorem \ref{vanishing-schur-normal-bundle} and Theorem \ref{vanishing-tensor-product}. These vanishing results will be crucial in proving Theorem \ref{thm1-intro}.

\subsection{A vanishing theorem for the normal bundle}\label{subsection-normal-bundle-vanishing}
To start with, recall that for a vector bundle $F$ of rank $d$, and natural numbers $a\geq 0$, $b\geq 0$ we have the Schur functor
\[
S_{(a, 1^b)} F=\operatorname{ker}\left( \bigwedge^{b} F\otimes \operatorname{Sym}^a F \lra \bigwedge^{b-1}F\otimes \operatorname{Sym}^{a+1}F\right).
\]

\noindent We have $S_{(a,1^b)}F=0$ if $b\geq d$ or $a=0$ and $b>0$. Particular cases of interest are
\begin{enumerate}
    \item $S_{(a, 1^0)}F\cong \operatorname{Sym}^a F$,
    \item $S_{(1, 1^b)}F = \bigwedge^{b+1} F$, and
    \item $S_{(a, 1^{d-1})} F\cong \operatorname{Sym}^{a-1}F\otimes \operatorname{det}(F)$.
\end{enumerate}

\noindent Quite generally, since we work over the complex numbers, Pieri's rule gives
\[
\bigwedge ^b F \otimes \operatorname{Sym}^a F\cong S_{(a,1^b)}F\oplus S_{(a+1, 1^{b-1})}F.
\]

\noindent We need the following result, along the lines of Manivel's vanishing theorem \cite{Manivel.97}.

\begin{lemma}\label{manivel-vanishing}
        Let $X$ be a smooth projective variety, let $F$ be a nef vector bundle of rank $d$, and let $M$ be a big and nef line bundle. Let $a\geq 0$ and $0\leq b\leq d-2$ be two non-negative integers. Then we have
    \[
    H^i(X, \omega_X\otimes \operatorname{Sym}^a F\otimes F^{\otimes b} \otimes \operatorname{det}(F)^{b+1}\otimes M)=0
    \]

\noindent for all $i\geq 1$. (Here we use the convention that $F^{\otimes 0}=\operatorname{Sym}^0 F=\mathcal{O}_X$.)
\end{lemma}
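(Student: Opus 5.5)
The plan is to realize the bundle in the statement as a direct image of an adjoint line bundle on a fiber product of projective bundles, and then apply Kawamata--Viehweg vanishing upstairs. Let $\pi_0:\mathbb{P}(F)\lra X$ be the projective bundle of rank one quotients of $F$, with tautological bundle $\mathcal{O}(1)$. Then $\omega_{\mathbb{P}(F)/X}\cong \mathcal{O}(-d)\otimes\pi_0^*\operatorname{det}(F)$, so for every $k\geq 0$ we get
\[
\pi_{0,*}\bigl(\omega_{\mathbb{P}(F)/X}\otimes \mathcal{O}(d+k)\bigr)\cong \operatorname{Sym}^k F\otimes \operatorname{det}(F),
\]
and the higher direct images vanish because $\mathcal{O}(d+k)\otimes\omega_{\mathbb{P}(F)/X}$ restricts on each fiber $\mathbb{P}^{d-1}$ to $\mathcal{O}(k)$, which has $k\geq 0$. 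In particular, $k=1$ gives $F\otimes\operatorname{det}(F)$ and $k=a$ gives $\operatorname{Sym}^aF\otimes\operatorname{det}(F)$.

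Next, set $P:=\mathbb{P}(F)\times_X\cdots\times_X\mathbb{P}(F)$ with $b+1$ factors, and let $\pi:P\lra X$ be the projection. Then $P$ is smooth and projective, being an iterated projective bundle over $X$. Write $\mathcal{O}_i(1)$ for the pullback of the tautological bundle from the $i$-th factor, and put
\[
N:=\mathcal{O}_1(d+a)\otimes\mathcal{O}_2(d)\otimes\cdots\otimes\mathcal{O}_{b+1}(d)\otimes \mathcal{O}_2(1)\otimes\cdots\otimes\mathcal{O}_{b+1}(1)\otimes\pi^*M .
\]
Since $\omega_P=\pi^*\omega_X\otimes\bigotimes_i\omega_{i}$, with $\omega_i$ the pulled back relative canonical bundles, we can push forward one factor at a time. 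Using the projection formula and base change, which are valid because each factor is flat, we obtain
\[
R\pi_*(\omega_P\otimes N)\cong \omega_X\otimes\operatorname{Sym}^aF\otimes F^{\otimes b}\otimes\operatorname{det}(F)^{b+1}\otimes M,
\]
concentrated in degree $0$. The Leray spectral sequence then identifies the cohomology in the statement with $H^i(P,\omega_P\otimes N)$.

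It remains to show that $N$ is nef and big, after which Kawamata--Viehweg gives $H^i(P,\omega_P\otimes N)=0$ for $i\geq 1$. Nefness is immediate: each $\mathcal{O}_i(1)$ is nef because $F$ is nef, and $\pi^*M$ is nef. For bigness, I would write $N=R+\pi^*M$, where $R$ is a nef line bundle that is $\pi$-ample. This holds because every exponent is at least $d\geq 1$, and a positive combination of the tautological bundles of the factors is relatively ample on the fiber product $(\mathbb{P}^{d-1})^{b+1}$. Expanding $N^{\dim P}$, every term is a nonnegative intersection of nef classes. The term $\binom{\dim P}{n}R^{\dim P-n}\cdot\pi^*M^n$ equals $(M^n)\cdot(R|_{\text{fiber}})^{\dim P-n}$, which is positive since $M$ is big and nef and $R$ is ample on the fibers. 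Hence $N^{\dim P}>0$, and $N$ is big.

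I expect the main obstacle to be the bookkeeping: checking that the iterated pushforward really yields $\operatorname{Sym}^aF\otimes F^{\otimes b}\otimes\operatorname{det}(F)^{b+1}$ with no leftover twist, and that the relative higher direct images vanish at each stage, including the edge case $a=0$, where the first factor contributes $\mathcal{O}_X$. The bigness argument above is standard. The hypothesis $b\leq d-2$ does not seem to be needed in this approach; I expect it only matters for the later applications, where $F^{\otimes b}$ is compared with Schur functors $S_{(a,1^b)}F$, which vanish for $b\geq d$.
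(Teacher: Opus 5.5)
Your proposal is correct and is essentially the paper's proof: the same $(b+1)$-fold fiber product of $\mathbb{P}(F)$ over $X$, the same line bundle $\pi^*M\otimes\mathcal{O}_1(a+d)\otimes\bigotimes_{i\geq 2}\mathcal{O}_i(d+1)$ (nef, and big by positivity of the top self-intersection), Kawamata--Viehweg vanishing upstairs, and the formula $\omega_{\mathbb{P}(F)/X}\cong\mathcal{O}(-d)\otimes\pi_0^*\operatorname{det}(F)$ to push forward with vanishing higher direct images. You are also right that the hypothesis $b\leq d-2$ plays no role in this argument; the paper's proof does not use it either.
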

\begin{proof}
Set 
    \[
Y:=\mathbb{P}(F)\times _X \cdots \times_X \mathbb{P}(F),
\]

\noindent where the product is taken $b+1$ times, and let $\pi:Y\lra X$ be the structure morphism. Consider the line bundle
\[
N:=\pi^*M \otimes (\mathcal{O}_Y(a+d)\boxtimes \mathcal{O}_Y(1+d)^{\boxtimes b}). 
\]

\noindent We have that $\mathcal{O}_Y(1)$ is nef since $F$ is nef. In particular, we have that $N$ is nef. Since $\mathcal{O}_Y(1)$ is relatively ample over $X$ and nef on $Y$, and $M$ is big and nef on $X$, we have that
the top intersection product of $N$ is positive. Hence $N$ is big and nef. Therefore, by the Kawamata--Viehweg vanishing theorem, we get
\[
H^i(Y, \omega_Y\otimes N)=0
\]

\noindent for all $i\geq 1$. Recall that
\[
\omega_Y \cong \pi^* (\omega_X \otimes \operatorname{det}(F)^{b+1})\otimes \mathcal{O}_Y(-d)^{\boxtimes b+1}. 
\]

\noindent It is immediate to see that
\[
R^i\pi_*(\omega_Y\otimes N)=
\begin{cases}
    0 & \text{if }i>0, \text{ and}\\
    \omega_X\otimes \operatorname{Sym}^aF \otimes F^{\otimes b}\otimes \operatorname{det}(F)^{b+1}\otimes M & \text{if }i=0.
\end{cases}
\]

\noindent Therefore we are done.
\end{proof}

We apply Lemma \ref{manivel-vanishing} to obtain the following result. 

\begin{theorem}\label{vanishing-schur}
    Let $X$ be a smooth projective variety, let $F$ be a nef vector bundle of rank $d$ and let $M$ be a big and nef line bundle. Let $a\geq 0$ and $b\geq 0$ be two non-negative integers. Set $s:=\operatorname{min}\{d-1,b+1\}$. Then we have
    \[
    H^i(X, \omega_X\otimes S_{(a, 1^b)}F\otimes \operatorname{det}(F)^s\otimes M)=0
    \]

\noindent for all $i\geq 1$. 
\end{theorem}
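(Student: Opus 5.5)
The plan is to split into cases according to whether $b\leq d-2$ or $b\geq d-1$, and in the first case to realize $S_{(a,1^b)}F$ as a direct summand of a bundle covered by Lemma \ref{manivel-vanishing}.

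\emph{Degenerate cases.} If $b\geq d$, then $S_{(a,1^b)}F=0$ and there is nothing to prove. If $a=0$ and $b>0$, the Schur functor also vanishes. If $a=0$ and $b=0$, the bundle is $\mathcal{O}_X$ with $s=\min\{d-1,1\}$. When $d\geq 2$ we apply Lemma \ref{manivel-vanishing} with $a=b=0$, which gives vanishing for $\omega_X\otimes\operatorname{det}(F)\otimes M$. When $d=1$ we have $s=0$, and the claim is Kawamata--Viehweg vanishing for $M$.

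\emph{The case $b=d-1$.} Here $s=d-1$ and $S_{(a,1^{d-1})}F\cong \operatorname{Sym}^{a-1}F\otimes\operatorname{det}(F)$ (with $a\geq 1$). The bundle to consider is therefore $\omega_X\otimes\operatorname{Sym}^{a-1}F\otimes\operatorname{det}(F)^{d}\otimes M$. This is a direct summand of the analogous expression with $\operatorname{Sym}^{a-1}F\otimes F^{\otimes(d-1)}$, at least when $d-1\leq d-2$, which fails. So instead I would write it as
\[
\omega_X\otimes\operatorname{Sym}^{a-1}F\otimes\operatorname{det}(F)\otimes\bigl(\operatorname{det}(F)^{d-1}\otimes M\bigr)
\]
and apply Lemma \ref{manivel-vanishing} with $b=0$ and big and nef line bundle $M'=\operatorname{det}(F)^{d-1}\otimes M$. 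This works because $\operatorname{det}F$ is nef and $M$ is big and nef. This needs $0\leq d-2$, i.e.\ $d\geq 2$. If $d=1$, then $b=0$ and $\operatorname{Sym}^aF=F^a$ is a nef line bundle, so Kawamata--Viehweg applies directly.

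\emph{Main case $0\leq b\leq d-2$ with $a\geq1$.} Here $s=b+1$. By Pieri, $S_{(a,1^b)}F$ is a direct summand of $\bigwedge^bF\otimes\operatorname{Sym}^aF$. In characteristic zero, $\bigwedge^bF$ is a direct summand of $F^{\otimes b}$. Hence $\omega_X\otimes S_{(a,1^b)}F\otimes\operatorname{det}(F)^{b+1}\otimes M$ is a direct summand of
\[
\omega_X\otimes\operatorname{Sym}^aF\otimes F^{\otimes b}\otimes\operatorname{det}(F)^{b+1}\otimes M,
\]
whose higher cohomology vanishes by Lemma \ref{manivel-vanishing}. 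Since cohomology commutes with direct sums, the claim follows.

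The only delicate point is the boundary case $b=d-1$, where the exponent $s=d-1$ is smaller than $b+1$. There the substitution through $\operatorname{Sym}^{a-1}F\otimes\operatorname{det}F$, absorbing the extra determinant powers into the big and nef line bundle, is what makes Lemma \ref{manivel-vanishing} applicable.
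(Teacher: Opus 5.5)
Your proof is correct and essentially matches the paper's. You discard the vanishing Schur functors, realize $S_{(a,1^b)}F$ as a direct summand of $\operatorname{Sym}^aF\otimes F^{\otimes b}$ when $0\leq b\leq d-2$, and in the boundary case $b=d-1$ rewrite it as $\operatorname{Sym}^{a-1}F\otimes\operatorname{det}F$, absorbing $\operatorname{det}(F)^{d-1}$ into the big and nef line bundle before applying Lemma \ref{manivel-vanishing}; the only cosmetic difference is that for $a=b=0$, $d\geq 2$ you invoke the lemma where the paper applies Kawamata--Viehweg directly.
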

\begin{proof}
If $a=0$ and $b>0$ then $S_{(a,1^b)}F=0$. If $a=0$ and $b=0$, then $S_{(a,1^b)}F=\mathcal{O}_X$, and we are done by the Kawamata--Viehweg vanishing theorem. So we assume $a\geq 1$.

If $b\geq d$ then $S_{(a, 1^b)}F=0$, hence we may assume $b\leq d-1$. If $d=1$ then $b=s=0$.
Since $F$ is a line bundle and $S_{(a,1^b)}F=F^a$, we may conclude by the Kawamata--Viehweg vanishing theorem.
We assume therefore that $d\ge 2$.
Suppose now that $0\leq b\leq d-2$, so that $s=b+1$. 
Noting that $S_{(a, 1^{b})}F$ is a direct summand of $\operatorname{Sym}^{a} F\otimes F^{\otimes b}$,
 we may conclude by Lemma \ref{manivel-vanishing}.
If $b=d-1$ instead, then
\[
S_{(a,1^b)}F\cong \operatorname{Sym}^{a-1}F\otimes \operatorname{det}F.
\]

\noindent In this case $s=d-1$ and we have that
\[
H^i(X, \omega_X \otimes \operatorname{Sym}^{a-1}F\otimes \operatorname{det}(F)^{d}\otimes M)\cong
H^i(X, \omega_X \otimes \operatorname{Sym}^{a-1}F\otimes \operatorname{det}(F)\otimes M')
\]

\noindent where $M':=M\otimes \operatorname{det}(F)^{d-1}$ is big and nef. Hence we may conclude again by Lemma \ref{manivel-vanishing}.
\end{proof}

Next, we want to extend Theorem \ref{vanishing-schur} to a statement including $N_{X/\mathbb{P}^r}^\vee$ for an appropriate embedding $X\subseteq \mathbb{P}^r$.
Consider the blow up map
\[
\operatorname{bl}:X(t)\lra  X^t\times X
\]

\noindent along $\Delta_{X^t\times X}^{1,t+1}\cup \cdots \cup \Delta_{X^t\times X}^{t,t+1}$, where we denote by $\Delta_X^{i,j}$ the diagonal given by $x_i=x_j$. Let $E^{1,t+1}, \cdots, E^{t,t+1}$ be the corresponding irreducible exceptional divisors and set 
\[
E:=E^{1,t+1}+\cdots + E^{t,t+1}.
\]

\noindent We have
\[
\omega_{X(t)}=\operatorname{bl}^*(\omega_X^{\boxtimes t+1})((n-1)E).
\]

\begin{lemma}\label{vanishing-schur-and-diagonal-1}
    Let $X$ be a smooth projective variety of dimension $n$, and let $A$ be a very ample line bundle. Let $F$ be a nef vector bundle of rank $d$ on $X$, and let $M$ be a big and nef line bundle on $X(t)$. Let $a\geq 0$ and $b\geq 0$ be non-negative integers. Set $s:=\operatorname{min}\{d-1,b+1\}$. Then 
    \[
    H^i\left(X(t), \omega_{X(t)}(-(n+1)E)\otimes \operatorname{bl}^*(((A^{n+1})^{\boxtimes t}\boxtimes A^{(n+1)t})\otimes \operatorname{pr}_{t+1}^*(S_{(a, 1^b)} F\otimes \operatorname{det}(F)^s))\otimes M\right)=0
    \]

\noindent for all $i\geq1$. In particular, we have
\[
R^i \operatorname{bl}_* \omega_{X(t)}(-(n+1)E)=0
\]

\noindent for all $i\geq 1$.
\end{lemma}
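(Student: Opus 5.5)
The plan is to reduce the statement directly to Theorem \ref{vanishing-schur}, applied on $X(t)$ instead of on $X$. The key observation is that the line bundle
\[
N:=\operatorname{bl}^*\big((A^{n+1})^{\boxtimes t}\boxtimes A^{(n+1)t}\big)(-(n+1)E)
\]
is nef on $X(t)$.

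To see this, first regroup the factors of $A$: we have
\[
(A^{n+1})^{\boxtimes t}\boxtimes A^{(n+1)t}\cong\bigotimes_{i=1}^t\big(\operatorname{pr}_i^*A\otimes \operatorname{pr}_{t+1}^*A\big)^{n+1}.
\]
Since $E=\sum_i E^{i,t+1}$, this gives
\[
N\cong \bigotimes_{i=1}^t\Big(\operatorname{bl}^*\big(\operatorname{pr}_i^*A\otimes\operatorname{pr}_{t+1}^*A\big)(-E^{i,t+1})\Big)^{n+1}.
\]
Because $A$ is very ample, the ideal of the diagonal $\Delta_X\subseteq X\times X$ twisted by $A\boxtimes A$ is globally generated. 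Indeed, it is generated by the sections $s\otimes s'-s'\otimes s$ with $s,s'\in H^0(X,A)$, since these cut out $\Delta_X$ scheme-theoretically for an embedding. Pulling back along $\operatorname{pr}_i\times\operatorname{pr}_{t+1}$ shows that $\mathcal{I}_{\Delta^{i,t+1}}\otimes\operatorname{pr}_i^*A\otimes\operatorname{pr}_{t+1}^*A$ is globally generated on $X^t\times X$. The diagonals $\Delta^{i,t+1}$ are smooth, and the blow-up along their union is an iterated blow-up. The inverse image of $\mathcal{I}_{\Delta^{i,t+1}}$ on $X(t)$ is $\mathcal{O}(-E^{i,t+1})$, possibly after noting that the total transforms of the remaining centers meet it in exceptional divisors, which only adds effective terms. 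I expect this last point to be the main technical check: one must make sure the pulled-back ideal is exactly $\mathcal{O}(-E^{i,t+1})$, or at least that $\operatorname{bl}^*(\operatorname{pr}_i^*A\otimes\operatorname{pr}_{t+1}^*A)(-E^{i,t+1})$ is still globally generated. Granting it, each factor is globally generated, hence $N$ is nef.

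Next, set $F':=(\operatorname{pr}_{t+1}\circ\operatorname{bl})^*F$. This is a nef vector bundle of rank $d$ on $X(t)$. Schur functors and determinants commute with pullback, so
\[
\operatorname{bl}^*\operatorname{pr}_{t+1}^*\big(S_{(a,1^b)}F\otimes\det(F)^s\big)\cong S_{(a,1^b)}F'\otimes\det(F')^s.
\]
The line bundle $M':=N\otimes M$ is nef, and it is big because $M$ is big and nef and $N$ is nef. The cohomology group in the statement is then exactly
\[
H^i\big(X(t),\omega_{X(t)}\otimes S_{(a,1^b)}F'\otimes\det(F')^s\otimes M'\big),
\]
which vanishes for $i\geq1$ by Theorem \ref{vanishing-schur} applied on the smooth projective variety $X(t)$.

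For the "in particular" statement, take $F=\mathcal{O}_X$ with $a=b=0$, and $M=\operatorname{bl}^*H^k$ with $H$ ample on $X^{t+1}$. Then $M$ is big and nef, so we obtain
\[
H^i\big(X(t),\omega_{X(t)}(-(n+1)E)\otimes\operatorname{bl}^*(P\otimes H^k)\big)=0
\]
for all $i\geq1$ and all $k\geq1$, where $P:=(A^{n+1})^{\boxtimes t}\boxtimes A^{(n+1)t}$. By the projection formula,
\[
R^j\operatorname{bl}_*\big(\omega_{X(t)}(-(n+1)E)\otimes\operatorname{bl}^*(P\otimes H^k)\big)\cong R^j\operatorname{bl}_*\omega_{X(t)}(-(n+1)E)\otimes P\otimes H^k.
\]
For $k\gg0$, Serre vanishing kills all higher cohomology of these sheaves on $X^{t+1}$, so the Leray spectral sequence degenerates. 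It gives
\[
H^0\big(X^{t+1},R^i\operatorname{bl}_*\omega_{X(t)}(-(n+1)E)\otimes P\otimes H^k\big)=0.
\]
Since this sheaf is globally generated for $k\gg0$, we conclude $R^i\operatorname{bl}_*\omega_{X(t)}(-(n+1)E)=0$ for $i\geq1$.
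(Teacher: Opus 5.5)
Your proposal is correct and is the paper's argument. The paper only asserts that $\operatorname{bl}^*\big((A^{n+1})^{\boxtimes t}\boxtimes A^{(n+1)t}\big)(-(n+1)E)$ is basepoint free and then invokes Theorem \ref{vanishing-schur} on $X(t)$. Your regrouping into the factors $\operatorname{bl}^*(\operatorname{pr}_i^*A\otimes\operatorname{pr}_{t+1}^*A)(-E^{i,t+1})$, together with the Leray/Serre argument for the ``in particular'', fills in what the paper leaves implicit.

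The point you flag is harmless. $X(t)$ is the fibre product over the last factor of $t$ copies of $\operatorname{Bl}_{\Delta_X}(X\times X)$, and each projection to a copy is flat. Hence $\mathcal{I}_{\Delta^{i,t+1}}\cdot\mathcal{O}_{X(t)}=\mathcal{O}_{X(t)}(-E^{i,t+1})$ exactly. Your ``only adds effective terms'' fallback would not by itself give global generation, but it is not needed.
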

\begin{proof}
    We have that
    \[
    \operatorname{bl}^*((A^{n+1})^{\boxtimes t}\boxtimes A^{(n+1)t})(-(n+1)E)
    \]

\noindent is basepoint free. Hence the result follows from Theorem \ref{vanishing-schur}.
\end{proof}

\begin{corollary}\label{vanishing-schur-and-diagonal-2}
    Let $X$ be a smooth projective variety of dimension $n$, and let $A$ be a very ample line bundle. Let $F$ be a nef vector bundle of rank $d$ on $X$, and let $M$ be a big and nef line bundle on $X^t\times X$. Let $a\geq 0$ and $b\geq0$ be non-negative integers. Set $s:=\operatorname{min}\{d-1,b+1\}$ and 
    \[
    \mathcal{J}:=\mathcal{I}_{\Delta_{X^t\times X}^{1,t+1}}^2\otimes \cdots \otimes \mathcal{I}_{\Delta_{X^t\times X}^{t,t+1}}^2.
    \]
    
    \noindent Then 
    \[
    H^i\left(X^t\times X, \mathcal{J}\otimes  ((\omega_X\otimes A^{(n+1)})^{\boxtimes t} \boxtimes (\omega_X\otimes A^{(n+1)t}))\otimes \operatorname{pr}_{t+1}^*(S_{(a, 1^b)} F\otimes \operatorname{det}(F)^s)\otimes M\right)=0
    \]

\noindent for all $i\geq 1$. In particular, we have
\[
R^i \operatorname{pr}_{t+1,*}  (\mathcal{J}\otimes ((\omega_X\otimes A^{n+1})^{\boxtimes t}\boxtimes \mathcal{O}_X)\otimes M)=0
\]

\noindent for all $i\geq 1$. 
\end{corollary}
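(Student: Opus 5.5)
Set $Y:=X^t\times X$ and $\mathcal{L}:=((\omega_X\otimes A^{n+1})^{\boxtimes t}\boxtimes(\omega_X\otimes A^{(n+1)t}))\otimes \operatorname{pr}_{t+1}^*(S_{(a,1^b)}F\otimes \operatorname{det}(F)^s)\otimes M$. The plan is to deduce the corollary from Lemma \ref{vanishing-schur-and-diagonal-1} by pushing forward along $\operatorname{bl}:X(t)\lra Y$ and using the Leray spectral sequence. Since the diagonals $\Delta^{i,t+1}_{X^t\times X}$ are smooth of codimension $n$, blowing them up (in a suitable order, or simultaneously, since their pairwise intersections are transversal enough for the standard description of $X(t)$ used above) gives
\[
\operatorname{bl}_*\mathcal{O}_{X(t)}(-2E)=\mathcal{J}.
\]
This holds blow-up by blow-up: for a blow-up of a smooth center, $\operatorname{bl}_*\mathcal{O}(-kE)=\mathcal{I}^k$, and higher direct images of $\mathcal{O}(-kE)$ vanish for $0\le k\le n-1$, hence in particular for $k=2$ whenever $n\ge 3$. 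I will treat $n\le 2$ separately, or instead get the vanishing directly from the second part of Lemma \ref{vanishing-schur-and-diagonal-1}, as explained next.

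The key identity is $\omega_{X(t)}(-(n+1)E)=\operatorname{bl}^*(\omega_X^{\boxtimes t+1})(-2E)$, which follows from $\omega_{X(t)}=\operatorname{bl}^*(\omega_X^{\boxtimes t+1})((n-1)E)$. The line bundle of Lemma \ref{vanishing-schur-and-diagonal-1}, taken with $\operatorname{bl}^*M$ in place of $M$, is therefore
\[
\operatorname{bl}^*\mathcal{L}\otimes\mathcal{O}_{X(t)}(-2E).
\]
Note that $\operatorname{bl}^*M$ is still big and nef. The lemma also gives $R^i\operatorname{bl}_*\omega_{X(t)}(-(n+1)E)=0$ for $i\ge1$, i.e. $R^i\operatorname{bl}_*\mathcal{O}(-2E)\otimes\omega^{\boxtimes t+1}=0$. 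By the projection formula,
\[
R^i\operatorname{bl}_*(\operatorname{bl}^*\mathcal{L}(-2E))=\mathcal{L}\otimes R^i\operatorname{bl}_*\mathcal{O}(-2E),
\]
which vanishes for $i\ge1$ and equals $\mathcal{L}\otimes\mathcal{J}$ for $i=0$. The Leray spectral sequence then identifies $H^i(Y,\mathcal{J}\otimes\mathcal{L})$ with $H^i(X(t),\operatorname{bl}^*\mathcal{L}(-2E))$, which is zero for $i\ge1$ by Lemma \ref{vanishing-schur-and-diagonal-1}.

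For the relative statement, take $a=b=0$, so the Schur term is trivial, and let $M'$ be $M$ twisted by $\operatorname{pr}_{t+1}^*(\omega_X^{-1}\otimes A^{-(n+1)t}\otimes H^{k})$ for $H$ ample on $X$ and $k\gg0$. The factor $A^{(n+1)t}$ is thereby absorbed, and $M'$ remains big and nef once $k$ is large. The absolute vanishing then gives $H^i(Y,\mathcal{G}\otimes\operatorname{pr}_{t+1}^*H^k)=0$ for all $k\gg0$, where
\[
\mathcal{G}:=\mathcal{J}\otimes((\omega_X\otimes A^{n+1})^{\boxtimes t}\boxtimes\mathcal{O}_X)\otimes M.
\]
By Serre vanishing and the Leray spectral sequence for $\operatorname{pr}_{t+1}$, this forces $R^i\operatorname{pr}_{t+1,*}\mathcal{G}=0$ for $i\ge1$: take the top nonzero $R^q$, twist by a high power of $H$, and read off $H^0(X,R^q\operatorname{pr}_{t+1,*}\mathcal{G}\otimes H^k)=H^q(Y,\cdot)=0$.

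The main obstacle is checking that $\operatorname{bl}_*\mathcal{O}(-2E)=\mathcal{J}$, i.e. that pushing forward from the iterated blow-up recovers exactly the product of the squared ideals. The $t$ diagonals meet along the small diagonal, so the product of ideals must match the pushforward there. I would argue locally, using that after blowing up one diagonal the strict transforms of the others are smooth and transverse to the exceptional divisor. If this proves delicate, I would replace $\mathcal{J}$ by $\operatorname{bl}_*\mathcal{O}(-2E)$, which is all that the later applications need.
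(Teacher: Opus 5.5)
Your proposal is correct and is exactly the argument the paper compresses into ``immediate from Lemma~\ref{vanishing-schur-and-diagonal-1}''. You rewrite $\omega_{X(t)}(-(n+1)E)=\operatorname{bl}^*(\omega_X^{\boxtimes t+1})(-2E)$, push forward using $R\operatorname{bl}_*\mathcal{O}_{X(t)}(-2E)=\mathcal{J}$ and the projection formula, apply Leray, and get the relative statement by twisting with $\operatorname{pr}_{t+1}^*H^k$ and using Serre vanishing.

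A few minor corrections. First, $R^i\operatorname{bl}_*\mathcal{O}(-kE)=0$ for $i\geq 1$ holds for every $k\geq 0$; the range $0\leq k\leq n-1$ is the one relevant for $\mathcal{O}(kE)$. So $n\leq 2$ needs no separate treatment. Second, in the relative part you must also absorb the factor $\operatorname{pr}_{t+1}^*\det(F)^s$, for instance by taking $F=\mathcal{O}_X$. Third, the identification $\operatorname{bl}_*\mathcal{O}(-2E)=\mathcal{J}$ is cleanest via $X(t)\cong \operatorname{Bl}_{\Delta_X}(X\times X)\times_X\cdots\times_X\operatorname{Bl}_{\Delta_X}(X\times X)$, the fibre product over the last factor. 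K\"unneth over $X$, together with the flatness of $\mathcal{I}_{\Delta_X}^2$ over that factor, then gives the identification and shows that the tensor product of ideal sheaves agrees with the product ideal.
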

\begin{proof}
    Immediate from Lemma \ref{vanishing-schur-and-diagonal-1}. 
\end{proof}

We are now in a position to prove the first main vanishing theorem.

\begin{theorem}\label{vanishing-schur-normal-bundle}
    Let $X$ be a smooth projective variety of dimension $n$, and let $A$ be a very ample line bundle. Let $F$ be a nef vector bundle of rank $d$, and let $M$ be a big and nef line bundle. Let $a\geq 0$,  $b\geq0$ and $t\geq 0$ be non-negative integers. Set $s:=\operatorname{min}\{d-1,b+1\}$. 
    Let $N$ be a big and nef line bundle, set $L:=\omega_X\otimes A^{n+1}\otimes N$ and let $X\subseteq \mathbb{P}^r$ be the embedding induced by $L$. 
    Then
    \[
    H^i(X, \omega_X\otimes (N_{X/\mathbb{P}^r}^\vee\otimes L)^{\otimes t}\otimes A^{(n+1)t}\otimes S_{(a, 1^b)} F\otimes \operatorname{det}(F)^s\otimes M)=0
    \]

    \noindent for all $i\geq 1$. 
\end{theorem}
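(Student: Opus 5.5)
We will realize $(N_{X/\mathbb{P}^r}^\vee\otimes L)^{\otimes t}$ as a pushforward from $X^t\times X$ and then read off the vanishing from Corollary \ref{vanishing-schur-and-diagonal-2}. By Subsection \ref{subsection-syzygy-normal},
\[
N_{X/\mathbb{P}^r}^\vee\otimes L\cong \operatorname{pr}_{2,*}(\operatorname{pr}_1^*L\otimes \mathcal{I}_{\Delta_X}^2).
\]
Taking $t$-fold tensor products and using flat base change, together with the fact that the relevant sheaves are flat over the last factor, we obtain
\[
(N_{X/\mathbb{P}^r}^\vee\otimes L)^{\otimes t}\cong \operatorname{pr}_{t+1,*}\big(\mathcal{J}\otimes (L^{\boxtimes t}\boxtimes\mathcal{O}_X)\big),
\]
where $\mathcal{J}$ is as in Corollary \ref{vanishing-schur-and-diagonal-2}. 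To justify this, observe that over each point $x$ of the last factor the fiber is $\bigotimes_k (L\otimes \mathcal{I}_x^2)$ on $X^t$. Moreover, $H^1(X,L\otimes\mathcal{I}_x^2)=0$ because $L=\omega_X\otimes A^{n+1}\otimes N$ separates first-order jets, so the pushforward is locally free of the expected rank. Equivalently, one can run induction on $t$ using the projection formula one factor at a time. The case $t=0$ is just Theorem \ref{vanishing-schur}, with $\omega_X$ absorbed.

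Next we write $L^{\boxtimes t}=(\omega_X\otimes A^{n+1})^{\boxtimes t}\otimes (N^{\boxtimes t}\boxtimes \mathcal{O}_X)$. Put
\[
\mathcal{G}:=\mathcal{J}\otimes ((\omega_X\otimes A^{n+1})^{\boxtimes t}\boxtimes\mathcal{O}_X)\otimes (N^{\boxtimes t}\boxtimes \mathcal{O}_X).
\]
Then the group we must kill is
\[
H^i\Big(X,\ \operatorname{pr}_{t+1,*}(\mathcal{G})\otimes \omega_X\otimes A^{(n+1)t}\otimes S_{(a,1^b)}F\otimes\det(F)^s\otimes M\Big).
\]
The plan is to pull the factors on $X$ back along $\operatorname{pr}_{t+1}$ by the projection formula. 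The total sheaf upstairs is then
\[
\mathcal{J}\otimes\big((\omega_X\otimes A^{n+1})^{\boxtimes t}\boxtimes(\omega_X\otimes A^{(n+1)t})\big)\otimes \operatorname{pr}_{t+1}^*(S_{(a,1^b)}F\otimes\det(F)^s)\otimes M',
\]
where $M':=N^{\boxtimes t}\boxtimes M$ is big and nef on $X^t\times X$, being an exterior product of big and nef bundles. This is exactly the shape in Corollary \ref{vanishing-schur-and-diagonal-2}.

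The key steps, in order, are as follows.
\begin{enumerate}
\item Apply the "in particular" part of Corollary \ref{vanishing-schur-and-diagonal-2} with the big and nef bundle $N^{\boxtimes t}\boxtimes\mathcal{O}_X$. Strictly speaking this bundle is only nef, so we may need to twist by a small pullback of an ample bundle from the last factor, or observe that the proof of Lemma \ref{vanishing-schur-and-diagonal-1} only needs relative bigness. This gives $R^j\operatorname{pr}_{t+1,*}\mathcal{G}=0$ for $j\ge1$.
\item Use the Leray spectral sequence to identify the cohomology on $X$ with the cohomology on $X^t\times X$ of the sheaf displayed above.
\item Apply the main statement of Corollary \ref{vanishing-schur-and-diagonal-2} with $M'$ to conclude the vanishing for $i\geq 1$.
\end{enumerate}

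The main obstacle is the identification of $(N^\vee\otimes L)^{\otimes t}$ with the pushforward of $\mathcal{J}\otimes(L^{\boxtimes t}\boxtimes\mathcal{O})$, together with the higher direct image vanishing in step 1 with only nef positivity available. For the first, we would argue fiberwise via cohomology and base change: on the fiber over $x$, $\mathcal{J}$ restricts to $\boxtimes_k\mathcal{I}_x^2$, and Künneth reduces everything to the single-factor statement. For the second, we would use the blow-up description in Lemma \ref{vanishing-schur-and-diagonal-1}: relative Kawamata--Viehweg over $X$ only needs the line bundle to be $\operatorname{pr}_{t+1}$-big and nef, and this holds since $N^{\boxtimes t}$ is big on the fibers $X^t$.
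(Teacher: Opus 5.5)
Your proposal is correct and follows the paper's proof. The paper likewise identifies $(N_{X/\mathbb{P}^r}^\vee\otimes L)^{\otimes t}$ with $\operatorname{pr}_{t+1,*}(\mathcal{J}\otimes(L^{\boxtimes t}\boxtimes\mathcal{O}_X))$ and concludes by Corollary \ref{vanishing-schur-and-diagonal-2}, leaving implicit the projection-formula and Leray bookkeeping (with $N^{\boxtimes t}\boxtimes M$ as the big and nef twist); your twist by $\operatorname{pr}_{t+1}^*H$ with $H$ ample correctly handles the non-bigness of $N^{\boxtimes t}\boxtimes\mathcal{O}_X$, and for the base-change step note that you need $H^j(X,L\otimes\mathcal{I}_x^2)=0$ for all $j\geq 1$, which follows from $1$-jet separation together with $H^j(X,L)=0$ by Kawamata--Viehweg.
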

\begin{proof}
We have seen in Subsection \ref{subsection-syzygy-normal} that there is an identification
\[
N_{X/\mathbb{P}^r}^\vee \otimes L\cong \operatorname{pr}_{2,*}(\mathcal{I}_{\Delta_X}^2 \otimes \operatorname{pr}_1^* L). 
\]

\noindent As a consequence, we obtain
\[
\left(N_{X/\mathbb{P}^r}^\vee \otimes L \right)^{\otimes t}\cong \operatorname{pr}_{t+1,*}(\mathcal{I}_{\Delta_X^{1,t+1}}^2 \otimes \cdots 
\otimes \mathcal{I}_{\Delta_X^{t,t+1}}^2 \otimes (L ^{\boxtimes t}\boxtimes \mathcal{O}_X)).
\]

\noindent Therefore we conclude by Corollary \ref{vanishing-schur-and-diagonal-2}.
\end{proof}

\begin{corollary}\label{specialized-vanishing}
    Let $X$ be a smooth projective variety of dimension $n$, and let $A$ be a very ample line bundle. Let $M$ be a big and nef line bundle. Let $a\geq 0$, $b\geq 0$, and $t\geq 0$ be non-negative integers. Set $s:=\operatorname{min}\{n-1,b+1\}$. 
    Let $N$ be a big and nef line bundle, set $L:=\omega_X\otimes A^{n+1}\otimes N$, and let $X\subseteq \mathbb{P}^r$ be the embedding induced by $L$. 
    Then
    \[
    H^i\left(X, \omega_X^{s+1}\otimes (N_{X/\mathbb{P}^r}^\vee\otimes L)^{\otimes t}\otimes A^{(n+1)t+2(a+b+ns)}\otimes S_{(a, 1^b)} \Omega_X^1\otimes M\right)=0
    \]

    \noindent for all $i\geq 1$. 
\end{corollary}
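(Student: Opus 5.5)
The plan is to apply Theorem \ref{vanishing-schur-normal-bundle} with the nef bundle $F:=\Omega_X^1\otimes A^2$. Its rank is $d=n$, so the integer $s=\min\{d-1,b+1\}$ of that theorem is exactly the $s=\min\{n-1,b+1\}$ in the statement.

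\emph{Step 1: $F$ is nef.} Since $A$ is very ample, it gives an embedding $X\subseteq\mathbb{P}^N$. The bundle $\Omega^1_{\mathbb{P}^N}(2)$ is globally generated; for instance, it is a quotient of $\bigwedge^2$ of the trivial bundle via the Euler sequence. Its restriction to $X$ surjects onto $\Omega^1_X\otimes A^2$. Hence $F$ is globally generated, and in particular nef.

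\emph{Step 2: rewrite the Schur functor and the determinant.} The partition $(a,1^b)$ has $a+b$ boxes, so twisting by a line bundle gives
\[
S_{(a,1^b)}F\cong S_{(a,1^b)}\Omega^1_X\otimes A^{2(a+b)}.
\]
The determinant is
\[
\det(F)\cong \omega_X\otimes A^{2n}, \qquad\text{so}\qquad \det(F)^s\cong\omega_X^s\otimes A^{2ns}.
\]
Combining these,
\[
\omega_X\otimes S_{(a,1^b)}F\otimes\det(F)^s\cong \omega_X^{s+1}\otimes S_{(a,1^b)}\Omega_X^1\otimes A^{2(a+b+ns)}.
\]

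\emph{Step 3: conclude.} Substituting the isomorphism of Step 2 into the vanishing of Theorem \ref{vanishing-schur-normal-bundle} (same $A$, $N$, $L$, $M$, $t$) produces exactly the cohomology group in the statement. The extra factor $(N_{X/\mathbb{P}^r}^\vee\otimes L)^{\otimes t}\otimes A^{(n+1)t}$ is carried along unchanged, so the groups vanish for all $i\geq 1$.

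The only step that is not pure bookkeeping is Step 1, the nefness of $\Omega^1_X\otimes A^2$, and it follows from the global generation argument above. The remaining work is checking the degree of the Schur functor, which is $|\lambda|=a+b$, so that the powers of $A$ match.
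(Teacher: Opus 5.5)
Your proposal is correct and is exactly the paper's argument: apply Theorem \ref{vanishing-schur-normal-bundle} to the globally generated rank-$n$ bundle $F=\Omega_X^1\otimes A^2$, using $\det(F)\cong\omega_X\otimes A^{2n}$ and $S_{(a,1^b)}F\cong S_{(a,1^b)}\Omega_X^1\otimes A^{2(a+b)}$. Your Euler-sequence argument that $\Omega_X^1\otimes A^2$ is globally generated, and hence nef, supplies a detail the paper only asserts.
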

\begin{proof}
    Apply Theorem \ref{vanishing-schur-normal-bundle} to the globally generated vector bundle $F:=\Omega_X^1 \otimes A^2$.
    Note that
    \[
    \operatorname{det}(F)=\omega_X\otimes A^{2n}
    \]

    \noindent and 
    \[
    S_{(a, 1^b)}(\Omega_X^1 \otimes A^2)=S_{(a, 1^b)}\Omega_X ^1 \otimes A^{2(a+b)}.
    \]
\end{proof}

\subsection{A vanishing theorem on the Hilbert scheme}
Let $X$ be a smooth projective variety and let $L$ be a very ample line bundle. Recall that
\[
M_{2,L}:=\operatorname{pr}_{2,*}(\operatorname{pr}_1^* L\otimes \mathcal{I}_{\mathcal{Z}_2})
\]

\noindent is a vector bundle on $X^{[2]}$. Recall also that $\tau:X^{[2,3]}\lra X^{[2]}$ is flat. Set
\[
Y:=X^{[2,3]}\times_{X^{[2]}}\times \cdots \times_{X^{[2]}}X^{[2,3]},
\]

\noindent where the product is taken $t$ times. We have that $Y$ has canonical local complete intersection singularities by the same argument as in \cite[Lemma 4.20]{Choi.Lacini.Park.Sheridan.25}. Furthermore $Y$ is obtained
as the blow up
\[
\operatorname{bl}:Y\lra X^t \times X^{[2]}
\]

\noindent along $\mathcal{Z}_2^{1,t+1}\cup \cdots \cup \mathcal{Z}_2 ^{t,t+1}$ with exceptional divisors
$F_2 ^{1,t+1}, \cdots, F_2 ^{t,t+1}$. Set
\[
G:=\sum_{j=1}^t \operatorname{pr}_j^* F_2.
\]

\noindent We have
\[
\omega_Y=\operatorname{bl}^*(\omega_X ^{\boxtimes t}\boxtimes \omega_{X^{[2]}})((n-1)G). 
\]

\noindent Let $A$ be a very ample line bundle on $X$. Note that $A^2$ separates $3$-schemes, hence by the discussion in Subsection \ref{subsection-hilbert-scheme} (see also \cite[Lemma 2.4]{Choi.Lacini.Park.Sheridan.25}) we have that
\[
\operatorname{res}^*A^2\otimes \tau^* N_{2,A^2}(-F_2)=\rho^* N_{3,A^2}
\]

\noindent is globally generated. Therefore
\[
\operatorname{bl}^*((A^2)^{\boxtimes t}\boxtimes N_{2,A^2}^{t})(-G)
\]

\noindent is globally generated. Let $M$ be a big and nef line bundle on $X^t\times X^{[2]}$. By the Kawamata--Viehweg vanishing theorem, we have
\[
H^i\left(Y, \omega_Y\otimes \operatorname{bl}^*((A^{2n})^{\boxtimes t}\boxtimes N_{2,A^2}^{tn}\otimes M)(-nG)\right)=0
\]

\noindent for all $i\geq 1$. As a consequence we have
\[
H^i\left(X^t\times X^{[2]}, (\omega_X\otimes A^{2n})^{\boxtimes t}\boxtimes (\omega_{X^{[2]}}\otimes N_{2,A^2}^{tn})\otimes M\otimes \mathcal{I}_{\mathcal{Z}_2 ^{1,t+1}}\otimes \cdots \otimes \mathcal{I}_{\mathcal{Z}_2 ^{t,t+1}}\right)=0
\]

\noindent for all $i\geq 1$. Proceeding as in Subsection \ref{subsection-normal-bundle-vanishing}, we obtain our second main vanishing theorem. 

\begin{theorem}\label{vanishing-tensor-product}
    Let $X$ be a smooth projective variety of dimension $n$, and let $A$ be a very ample line bundle. Let $N$ be a big and nef line bundle on $X$ and set $L:=\omega_X\otimes A^{2n} \otimes N$. Let $M$ be a big and nef line bundle on $X^{[2]}$. Let $t\geq 0$ be a non-negative integer. Then 
    \[
    H^i(X^{[2]}, \omega_{X^{[2]}}\otimes M_{2,L}^{\otimes t}\otimes N_{2,A^2}^{tn}\otimes M)=0
    \]

\noindent for all $i\geq 1$. 
\end{theorem}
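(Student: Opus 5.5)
We follow the template of the proof of Theorem \ref{vanishing-schur-normal-bundle}, with $X\times X$ replaced by $X\times X^{[2]}$ and the diagonal replaced by the universal family $\mathcal{Z}_2$. The first step is to express the tensor power as a pushforward. By definition $M_{2,L}=\operatorname{pr}_{2,*}(\operatorname{pr}_1^*L\otimes\mathcal{I}_{\mathcal{Z}_2})$ on $X\times X^{[2]}$. Taking the $t$-fold fibre product over $X^{[2]}$ and using flat base change, we aim for
\[
M_{2,L}^{\otimes t}\cong \operatorname{pr}_{t+1,*}\left(\mathcal{I}_{\mathcal{Z}_2^{1,t+1}}\otimes\cdots\otimes\mathcal{I}_{\mathcal{Z}_2^{t,t+1}}\otimes (L^{\boxtimes t}\boxtimes\mathcal{O}_{X^{[2]}})\right),
\]
where $\operatorname{pr}_{t+1}:X^t\times X^{[2]}\to X^{[2]}$. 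We justify this by pulling the ideal sheaves back to the blow up $Y$. There, by the description $\operatorname{bl}:Y\to X^t\times X^{[2]}$, the product of ideals becomes $\mathcal{O}_Y(-G)$, so it is (up to pushing forward) the locally free sheaf $\operatorname{bl}_*\mathcal{O}_Y(-G)$. Flatness of $\tau$ and of $\mathcal{Z}_2\to X^{[2]}$ makes the tensor product of the pushforwards agree with the pushforward of the tensor product. This mirrors the identification used for $(N^\vee_{X/\mathbb{P}^r}\otimes L)^{\otimes t}$.

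The second step is to write $L=\omega_X\otimes A^{2n}\otimes N$. The sheaf inside the pushforward, twisted by $\omega_{X^{[2]}}\otimes N_{2,A^2}^{tn}\otimes M$, then becomes
\[
(\omega_X\otimes A^{2n})^{\boxtimes t}\boxtimes(\omega_{X^{[2]}}\otimes N_{2,A^2}^{tn})\otimes M'\otimes \textstyle\prod_j\mathcal{I}_{\mathcal{Z}_2^{j,t+1}},
\qquad M':=N^{\boxtimes t}\boxtimes M .
\]
The line bundle $M'$ is big and nef on $X^t\times X^{[2]}$, as an exterior product of big and nef bundles. By the vanishing established just before the statement, this sheaf has no higher cohomology on $X^t\times X^{[2]}$.

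The third step, and the main obstacle, is to transfer this vanishing to $X^{[2]}$ through the Leray spectral sequence for $\operatorname{pr}_{t+1}$. This requires
\[
R^i\operatorname{pr}_{t+1,*}\left(\textstyle\prod_j\mathcal{I}_{\mathcal{Z}_2^{j,t+1}}\otimes (L^{\boxtimes t}\boxtimes \mathcal{O})\right)=0 \quad\text{for } i\ge1 .
\]
To obtain it, we apply the same Kawamata--Viehweg argument on $Y$ relative to $X^{[2]}$. We use that $\operatorname{bl}^*((A^2)^{\boxtimes t}\boxtimes N_{2,A^2}^{t})(-G)$ is globally generated, and that $M'$ can be replaced by its pullback plus a sufficiently positive pullback from $X^{[2]}$. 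Relative Kawamata--Viehweg for $Y\to X^{[2]}$ then gives vanishing of $R^i(\operatorname{pr}_{t+1}\circ\operatorname{bl})_*$. The extra factor $N_{2,A^2}^{tn}$ only comes from the base, so it does not affect relative vanishing.

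We also need $R^i\operatorname{bl}_*\mathcal{O}_Y(-G)=0$ and $\operatorname{bl}_*\mathcal{O}_Y(-G)=\prod\mathcal{I}$, in order to pass from $Y$ down to $X^t\times X^{[2]}$. This is the standard computation for blow ups along the local complete intersection centers $\mathcal{Z}_2^{j,t+1}$, which have codimension $n\geq 2$. For $n=1$ the map $\operatorname{bl}$ is an isomorphism. Since $Y$ has only canonical lci singularities, Kawamata--Viehweg applies on $Y$ as used above.

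With these in hand, the Leray spectral sequence degenerates. The cohomology of $\omega_{X^{[2]}}\otimes M_{2,L}^{\otimes t}\otimes N_{2,A^2}^{tn}\otimes M$ therefore equals the cohomology upstairs, which vanishes for $i\ge1$.
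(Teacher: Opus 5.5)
Your proposal is correct and follows the paper's own argument. The paper applies Kawamata--Viehweg on the fibre product $Y$ using the globally generated bundle $\operatorname{bl}^*((A^2)^{\boxtimes t}\boxtimes N_{2,A^2}^{t})(-G)$, descends to $X^t\times X^{[2]}$, and then, ``as in Subsection~\ref{subsection-normal-bundle-vanishing}'', writes $M_{2,L}^{\otimes t}$ as $\operatorname{pr}_{t+1,*}$ of the twisted product of ideals, with $M'=N^{\boxtimes t}\boxtimes M$ and the relative vanishing obtained by twisting with positivity from the base before applying Leray, exactly as you do. (Minor slip: $\operatorname{bl}_*\mathcal{O}_Y(-G)$ is the ideal sheaf $\prod_j\mathcal{I}_{\mathcal{Z}_2^{j,t+1}}$ and is not locally free; it is $\mathcal{O}_Y(-G)$ that is invertible.)
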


\section{Filtrations}\label{section-filtrations}
Here we use Theorem \ref{vanishing-schur-normal-bundle} and Theorem \ref{vanishing-tensor-product} from Section \ref{section-vanishing} to obtain a cohomology vanishing on vector bundles naturally arising on $\mathbb{P}(\Omega_X^1)$. 

Let $L$ be a very ample line bundle embedding a smooth projective variety $X$ into $\mathbb{P}^r$. Recall from Subsection \ref{subsection-syzygy-normal} that we have the cotangent sequence

\[
0\lra N_{X/\mathbb{P}^r}^\vee \otimes L \lra M_L \lra \Omega^1 _X \otimes L \lra 0.
\]

\noindent Set $Y:=\mathbb{P}_X(\Omega_X ^1)$ with structure morphism $\pi:Y\lra X$. Consider the relative Euler sequence
\[
0\lra \Omega^1 _{Y/X}(1)\lra \pi^* \Omega^1 _X\lra \mathcal{O}_Y(1)\lra 0.
\]

\noindent These two sequences may be combined into the diagram
\begin{center}
\begin{tikzcd}
& & 0 & 0& \\
& & \pi^* L(1)\ar[r,equals]\ar[u] & \pi^*L (1) \ar[u]& \\
   0\ar[r] &\pi^*(N_{X/\mathbb P ^r} ^\vee \otimes L)\ar[r]& \pi^* M_L \ar[r]\ar[u]& \pi^*(\Omega^1 _X \otimes L)\ar[r]\ar[u]&0 \\
   0\ar[r]& \pi^*(N_{X/\mathbb P ^r} ^\vee \otimes L)\ar[r]\ar[u,equals]& K\ar[r]\ar[u]& \Omega^1 _{Y/X}\otimes \pi^*L(1)\ar[u] \ar[r]& 0\\
   & & 0 \ar[u]& 0\ar[u] & 
\end{tikzcd}.
\end{center}

\noindent We have a filtration
\[
\bigwedge^j K = F^0\supseteq F^1 \supseteq \cdots \supseteq F^j \supseteq F^{j+1}=0
\]

\noindent with 
\begin{align*}
F^t/F^{t+1}&\cong \left(\pi^* \bigwedge^t (N_{X/\mathbb{P}^r}^\vee \otimes L)\right)\otimes \bigwedge^{j-t} \left(\Omega^1 _{Y/X}\otimes \pi^* L(1)\right)\\
&\cong\left(\pi^* \bigwedge^t (N_{X/\mathbb{P}^r}^\vee \otimes L)\right)\otimes\Omega_{Y/X}^{j-t} \otimes \pi^* L^{j-t}(j-t)\\
&\cong \pi^* \left( L^j\otimes \bigwedge^t N_{X/\mathbb{P}^r}^\vee \right)\otimes \Omega_{Y/X}^{j-t}(j-t)
\end{align*}

\noindent for $0\leq t\leq j$. Therefore, we have
\[
R^i \pi_* \left( \wedge ^j K\otimes \pi^* L^k (\ell)\right)=0
\]

\noindent for all $i\geq 1$, $j\geq 1$, $k\geq 0$ and $\ell\geq 0$. Moreover, the vector bundle
\[
\pi_* \left( \wedge ^j K\otimes \pi^* L^k (\ell)\right)
\]

\noindent has a filtration $G^0\supseteq G^1 \supseteq \cdots \supseteq G^j \supseteq G^{j+1}$ with 
\[
G^t/G^{t+1}\cong L^{j+k}\otimes \bigwedge^t N_{X/\mathbb{P}^r}^\vee \otimes S_{(\ell, 1^{j-t})} \Omega_X ^1,
\]

\noindent where we recall
\[
S_{(\ell, 1^{j-t})} \Omega_X ^1=\operatorname{ker}\left( \bigwedge^{j-t} \Omega_X ^1\otimes \operatorname{Sym}^\ell \Omega_X^1 \lra \bigwedge^{j-t-1}\Omega_X ^1 \otimes \operatorname{Sym}^{\ell+1}\Omega_X^1\right).
\]

\begin{theorem}\label{vanishing-filtration}
    Let $X$ be a smooth projective variety of dimension $n$, and let $A$ be a very ample line bundle. Let $j\geq 1$, $\ell\geq 1$
    and $k\geq 2$ be positive integers. Set $s:=\operatorname{min}\{n-1, j+1\}$.
    Define $Y:=\mathbb{P}_X(\Omega_X^1)$ with structure morphism $\pi:Y\lra X$. 
    Let $N$ be a big and nef line bundle and set $L:=\omega_X\otimes A^{m}\otimes N$. If 
    \[
    m\geq (n+2)j+\ell+ns
    \]
    
    \noindent then $H^i(Y, \wedge^j K \otimes \pi^* L^k(\ell))=0$ for all $i\geq 1$. 
\end{theorem}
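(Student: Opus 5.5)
The plan is to reduce the statement to a vanishing on $X$ via the Leray spectral sequence, and then treat each graded piece of the filtration $G^\bullet$ with Corollary \ref{specialized-vanishing}.

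\textbf{Reduction to $X$.} We have already observed that $R^i\pi_*\left(\wedge^j K\otimes \pi^*L^k(\ell)\right)=0$ for $i\geq 1$. The Leray spectral sequence therefore gives
\[
H^i\left(Y,\wedge^j K\otimes\pi^*L^k(\ell)\right)\cong H^i\left(X,\pi_*\left(\wedge^j K\otimes\pi^*L^k(\ell)\right)\right).
\]
Using the filtration $G^\bullet$ and the long exact cohomology sequences, it suffices to show, for every $0\leq t\leq j$ and $i\geq 1$, that
\[
H^i\left(X, L^{j+k}\otimes \textstyle\bigwedge^t N_{X/\mathbb{P}^r}^\vee\otimes S_{(\ell,1^{j-t})}\Omega_X^1\right)=0.
\]
Since we work over $\mathbb{C}$, $\bigwedge^t(N^\vee_{X/\mathbb{P}^r}\otimes L)$ is a direct summand of $(N^\vee_{X/\mathbb{P}^r}\otimes L)^{\otimes t}$. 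It is therefore enough to prove the vanishing for
\[
(N^\vee_{X/\mathbb{P}^r}\otimes L)^{\otimes t}\otimes L^{j+k-t}\otimes S_{(\ell,1^{j-t})}\Omega^1_X .
\]

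\textbf{Matching Corollary \ref{specialized-vanishing}.} Apply that corollary with $a=\ell$, $b=j-t$, and $s':=\min\{n-1,j-t+1\}$; note that $s'\leq s$. The embedding is given by $L=\omega_X\otimes A^{n+1}\otimes N'$ with $N':=A^{m-n-1}\otimes N$, which is big and nef because $m\geq n+1$. Since $s'\leq j-t+1$ and $k\geq 2$, we have $s'+1\leq j+k-t$. We may then split
\[
L^{j+k-t}=\omega_X^{s'+1}\otimes A^{m(s'+1)}\otimes N^{s'+1}\otimes L^{j+k-t-s'-1}.
\]
The corollary requires the power $A^{(n+1)t+2(\ell+j-t+ns')}$ to be extracted. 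We set
\[
M:=A^{m(s'+1)-(n+1)t-2(\ell+j-t+ns')}\otimes N^{s'+1}\otimes L^{j+k-t-s'-1}.
\]
For $M$ to be big and nef we need three things: the exponent of $A$ must be nonnegative; $N^{s'+1}$ is big and nef, which is automatic; and $L$ must be nef. The last holds because $\omega_X\otimes A^{n+1}$ is nef, by the standard adjunction/Mori cone fact, and $m\geq n+1$.

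\textbf{The numerical check.} This is where the hypothesis on $m$ enters, and it is the step I expect to be the main obstacle, since the weakest case is not obvious in advance. Because $s'+1\geq 2$, it suffices to have
\[
2m\geq (n+1)t+2\ell+2(j-t)+2ns' .
\]
The right-hand side is at most $(n+1)j+2\ell+2j+2ns$, using $t\leq j$ and $s'\leq s$. This is in turn at most $2\left((n+2)j+\ell+ns\right)\leq 2m$. The check is thus comfortable, but one must handle the edge cases carefully: $t=j$, where $b=0$ and $s'=1$; and $j-t\geq n$, where $S_{(\ell,1^{j-t})}\Omega^1_X=0$ and the graded piece vanishes trivially.

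With these choices, Corollary \ref{specialized-vanishing} kills $H^i$ of every graded piece. Hence $H^i$ vanishes for $\pi_*(\wedge^jK\otimes\pi^*L^k(\ell))$, and so for $\wedge^jK\otimes\pi^*L^k(\ell)$ on $Y$, for all $i\geq 1$.
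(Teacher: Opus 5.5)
Your route is the paper's: Leray plus the filtration $G^\bullet$, then Corollary \ref{specialized-vanishing} with $a=\ell$, $b=j-t$, $s'=\min\{n-1,j-t+1\}$, and the rest absorbed into a big and nef twist. You also make explicit two steps the paper leaves implicit: $\bigwedge^t$ being a direct summand of $\otimes^t$, and $L=\omega_X\otimes A^{n+1}\otimes(A^{m-n-1}\otimes N)$. For $n\geq 2$ your numerical check is correct.

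The gap is the sentence ``Because $s'+1\geq 2$''. This needs $s'\geq 1$, i.e.\ $n\geq 2$. For a curve, $s=s'=0$. The only nonzero graded piece is then $t=j$, since $S_{(\ell,1^b)}\Omega^1_X=0$ for $b\geq 1$. The exponent of $A$ in your $M$ is $m-2j-2\ell$, while the hypothesis only gives $m\geq 3j+\ell$. For $j=1$, $\ell=3$, $m=6$ the exponent is $-2$. So your reason for $M$ being big and nef fails. Your remark that $t=j$ gives $s'=1$ also silently assumes $n\geq 2$.

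The fix is to take the $A$-positivity from two copies of $L$ rather than $s'+1$ copies, which is always possible because $j+k-t\geq 2$. For $n=1$, write $L^2=\omega_X\otimes(\omega_X\otimes A^2)\otimes A^{2m-2}\otimes N^2$. Give one $\omega_X$ to the corollary and keep the nef factor $\omega_X\otimes A^2$ inside $M$, together with $N^2$ and $L^{k-2}$. The exponent of $A$ becomes $2m-2-2j-2\ell$, which is nonnegative since $m\geq 3j+\ell\geq j+\ell+2$.

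This is what the paper's choice does uniformly. It sets $M'=(\omega_X\otimes A^{n+1})^c\otimes A^d\otimes L^{k-2}\otimes N^{j-t+2}$ with $c=j-t+1-u$. That is, it takes the $A$-positivity from $L^{j-t+2}$ and pairs every surplus copy of $\omega_X$ with $A^{n+1}$, so it never needs $u\geq 1$.
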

\begin{proof}
    By the above discussion, it suffices to show 
    \[
    H^i\left(X, L^{j+k}\otimes \bigwedge^t N_{X/\mathbb{P}^r}^\vee \otimes S_{(\ell, 1^{j-t})} \Omega_X ^1\right)=0
    \]

\noindent for all $i\geq 1$. 
Set $u:=\operatorname{min}\{n-1,j-t+1\}$ and note $u\leq s$.
Corollary \ref{specialized-vanishing} gives the vanishing
    \[
    H^i\left(X, \omega_X^{u+1}\otimes (N_{X/\mathbb{P}^r}^\vee\otimes L)^{\otimes t}\otimes A^{(n+1)j+2(\ell+j+ns)}\otimes S_{(\ell, 1^{j-t})} \Omega_X^1\otimes M'\right)=0
    \]

\noindent for all big and nef divisors $M'$ and all $i\geq 1$. To match the two vanishings, define

\begin{align*}
M'&:=L^{j-t+k}\otimes \omega_X ^{-u-1}\otimes A^{-(n+1)j-2(\ell+j+ns)}\\
&=(\omega_X\otimes A^{n+1})^c \otimes A^d\otimes L^{k-2}\otimes N^{j-t+2},
\end{align*}

\noindent where $c:=j-t+1-u\geq 0$, and 
\[
d:=m(j-t+k)-(n+1)j-2(\ell+j+ns)-c(n+1)-m(k-2).
\]

\noindent For $m\geq (n+1)j+(\ell+j+ns)$ we have $d\geq 0$, so $M'$ is big and nef, and we are done.
\end{proof}

\section{Syzygies}
In this section we combine all previous vanishing theorems to prove our main result, Theorem \ref{thm1-intro}. 
To start with, consider the short exact sequence
\[
0\lra \rho^* M_{2,L}\lra \tau^* M_{1,L}\lra \operatorname{res}^*L(-F_1)\lra 0.
\]

\noindent on $X^{[1,2]}$. Recall that
\[
\operatorname{res}\times \tau:X^{[1,2]}\lra X\times X
\]

\noindent is the blow up map along the diagonal $\Delta_X$, with exceptional divisor $F_1\cong \mathbb{P}(\Omega_X^1)$. Restricting to $F_1$, we obtain
\[
0\lra \rho^* M_{2,L}|_{F_1}\lra \pi^* M_L\lra \pi^* L(1)\lra 0.
\]

\noindent Hence, we have the isomorphism
\[
\rho^* M_{2,L}|_{F_1}\cong K, 
\]

\noindent where $K$ is the vector bundle defined in Section \ref{section-filtrations}. Set
\[
Q:=\bigwedge^j M_{2,L}\otimes A_{2,L}.
\]

\noindent We have a short exact sequence
\[
0\lra \rho^*Q(-(a+1)F_1)\lra \rho^*Q(-aF_1)\lra
\rho^*Q(-aF_1)|_{F_1}\lra 0. 
\]

\noindent for any $a\geq 0$. Note the isomorphism
\[
\rho^*Q(-aF_1)|_{F_1}\cong \wedge^j K\otimes \pi^* L^2 (a+2).
\]

\noindent Therefore, by Theorem \ref{vanishing-filtration}, we have
\[
H^i(F_1, \rho^* Q(-aF_1)|_{F_1})=0
\]

\noindent for $a\geq 0$, $i\geq 1$ and $L$ of the form
\[
L:=\omega_X\otimes A^{m}\otimes N,
\]

\noindent where $N$ is a big and nef line bundle on $X$, and $m\geq (n+2)j +a+2+ns$. Hence in this range we get a surjection
\[
H^i(X^{[1,2]}, \rho^*Q(-(a+1)F_1))\lra H^i(X^{[1,2]}, \rho^* Q(-aF_1)).
\]

\noindent Recall that $\rho:X^{[1,2]}\lra X^{[2]}$ is a $2:1$ finite flat map. Note that $E_{2, \mathcal{O}_X}$ is a rank 2 vector bundle on $X^{[2]}$ containing $\mathcal{O}_{X^{[2]}}$ as a direct summand. Since $\operatorname{det}(E_{2,\mathcal{O}_X})=\delta_2^{-1}$, we have $E_{2,\mathcal{O}_X}\cong \mathcal{O}_{X^{[2]}}\oplus \delta_2 ^{-1}$. 
From the discussion in Subsection \ref{subsection-hilbert-scheme}, we also have 
\[
\rho^* \delta_2 \cong \mathcal{O}_{X^{[1,2]}}(F_1). 
\]

\noindent Hence
\[
\rho_* \mathcal{O}_{X^{[1,2]}}(-aF_1)\cong \rho_*\rho^* \delta_2 ^{-a}\cong \delta_2^{-a}\oplus \delta_2 ^{-a-1}. 
\]

\noindent Therefore, we get a surjection
\[
H^i(X^{[2]}, Q\otimes \delta_2 ^{-a-1})\oplus H^i(X^{[2]}, Q\otimes \delta_2 ^{-a-2})\lra
H^i(X^{[2]}, Q\otimes \delta_2 ^{-a})\oplus H^i(X^{[2]}, Q\otimes \delta_2 ^{-a-1}).
\]

\noindent By Theorem \ref{vanishing-tensor-product}, we have that 
\[
H^i(X^{[2]}, Q\otimes \delta_2 ^{-a-1})\oplus H^i(X^{[2]}, Q\otimes \delta_2 ^{-a-2})=0
\]

\noindent provided that $a+3\geq jn$. In order to satisfy property $N_{3,p}$, it suffices to check the
vanishing for all $0\leq j\leq i+p\leq 2n+p$ by Theorem \ref{n3p-criterion}. Hence it is sufficient to have 
\begin{align*}
m&\geq (n+2)(2n+p)+(2n+p)n+n(n-1)\\
&=(2n+p)(2n+2)+n(n-1).
\end{align*}

\vspace{30pt}


\begin{thebibliography}{EGHAP05}
\setlength{\parskip}{3pt}


\bibitem[BL25]{Bangere.Lacini.25}
Purnaprajna Bangere and Justin Lacini,
\emph{Syzygies of adjoint linear series on projective varieties}, Duke Math. J. \textbf{174} (2025), 473--499.

\bibitem[CLPS25]{Choi.Lacini.Park.Sheridan.25}
Doyoung Choi, Justin Lacini, Jinhyung Park, John Sheridan, \emph{Singularities and syzygies of secant varieties of smooth projective varieties}, arXiv:2502.19703v1 (2025), 77 pages.

\bibitem[CP26]{Choi.Park.26}
Doyoung Choi, Jinhyung Park, \emph{Effective results on projective normality of the first and second secant varieties}, arXiv:2606.30002v1 (2026), 19 pages.

\bibitem[CS18]{Chou.Song.18}
Chih-Chi Chou and Lei Song, \textit{Singularities of secant varieties}, Int. Math. Res. Not. \textbf{9} (2018), 2844--2865.

\bibitem[EL93]{Ein.Lazarsfeld.93}
Lawrence Ein and Robert Lazarsfeld,
\emph{Syzygies and Koszul cohomology of smooth projective varieties of arbitrary dimension},
Invent. \!Math. \textbf{111} (1993), 51--67.

\bibitem[Man97]{Manivel.97}
Laurent Manivel, \emph{Vanishing theorems for ample vector bundles}, Invent. Math. \textbf{127} (1997), 401--416.

\bibitem[Ull16]{Ullery.16}
Brooke Ullery, \emph{On the normality of secant varieties},
Adv. \!Math. \textbf{288} (2016), 631--647.


\end{thebibliography}

\end{document}